\numberwithin{equation}{section}
\theoremstyle{plain}
\newtheorem{theorem}{Theorem}[section]
\newtheorem{corollary}[theorem]{Corollary}
\newtheorem{prop}[theorem]{Proposition}
\newtheorem{lemma}[theorem]{Lemma}
\theoremstyle{remark}
\newtheorem{remark}[theorem]{Remark}
\newtheorem{question}[theorem]{Question}
\theoremstyle{definition}
\newcommand{\supp}{\textrm{Supp}}
\newcommand{\e}{\varepsilon}
\newcommand{\dimp}{\dim_P}
\newcommand{\R}{\mathbb{R}}
\newcommand{\DD}{\mathcal{D}}
\newcommand{\RR}{\mathcal{R}}
\newcommand{\EE}{\mathbf{E}}
\newcommand{\wt}{\widetilde}
\DeclareMathOperator{\por}{por}
\DeclareMathOperator{\esssup}{ess\,sup}
\title[The dimension of mean porous measures]{The dimension of weakly mean porous measures: a probabilistic approach}
\author{Pablo Shmerkin}
\address{Pablo Shmerkin\\
         University of Manchester. Oxford Road\\
         School of Mathematics. Alan Turing Building\\
         Manchester M13 9PL\\
         UK }
\email{Pablo.Shmerkin@manchester.ac.uk}
\thanks{I acknowledge support from EPSRC grant EP/E050441/1 and the University of Manchester.}
\subjclass[2000]{Primary 28A80}
\date{\today}
\begin{document}

\begin{abstract}
Using probabilistic ideas, we prove that if $\mu$ is a mean porous measure on $\R^n$, then the packing dimension of $\mu$ is strictly smaller than $n$. Moreover, we give an explicit bound for the packing dimension, which is asymptotically sharp in the case of small porosity. This result was stated in [D. B. BELIAEV and S. K. SMIRNOV, ``On dimension of porous measures'', \textit{Math. Ann.} 323 (2002) 123-141], but the proof given there is not correct. We also give estimates on the dimension of weakly mean porous measures, which improve another result of Beliaev and Smirnov.
\end{abstract}

\maketitle

\section{Introduction and statement of results}

\subsection{Background and definitions} Porosity and dimension are two useful but different concepts for quantifying the degree of singularity of fractal sets and measures; thus it is a natural problem to understand the relationship between them. This problem has received considerable attention over the last two decades, and continues to be an active area of research. Estimates for dimension in terms of porosity were obtained for a wide variety of notions of porosity (and dimension) \cite{MartioVuorinen87, Mattila88, Salli91, KoskelaRohde97, EJJ00, BeliaevSmirnov02, JarvenpaaJarvenpaa02, JJKS05, Nieminen06, Kaenmaki07, KaenmakiSuomala08, BJJKRSS09,  JJKRRS09, KaenmakiSuomala09, KRS09, Rajala09}, and the porosity (in an appropriate sense) of many natural sets and measures was investigated  \cite{KoskelaRohde97, Urbanski01, BeliaevSmirnov02, JJM02, Chousionis09}. Moreover, the relationship between porosity and other geometric concepts such as conical densities and singular integrals was explored \cite{Mattila88, KaenmakiSuomala08, KaenmakiSuomala09, Chousionis09}.

 This article is a contribution to this line of research. Our results yield a strong quantitative form of a theorem that was stated in \cite{BeliaevSmirnov02}, but with an incorrect proof. The methods we employ are rather different from those used in previous research on porosities and dimension.

We start by reviewing some of the main definitions and results. Let $E$ be a subset of $\R^n$. Given $x\in E$ and $r>0$, we define
\[
\por(E,x,r) = \sup\{\alpha\ge 0: \exists y \textrm{ such that } B(y,\alpha r)\subset B(x,r)\setminus E \}.
\]
Thus, $\por(E,x,r)$ denotes the relative size of the largest hole in $E$ one can find around $x$ at scale $r$. The \textbf{porosity} of $E$ is then defined as
\[
\por(E) = \sup\left\{ \alpha\ge 0: \liminf_{r\rightarrow 0} \por(E,x,r) \ge \alpha \text{ for all } x\in E\right\}.
\]
It is rather easy to see that $0\le \por(E)\le 1/2$, and it seems intuitively clear that if $\por(E)>0$ then $E$ cannot have full dimension. This is true, and remains valid under the weaker assumption of mean porosity, introduced by Koskela and Rohde in \cite{KoskelaRohde97}: we say that $E\subset\mathbb{R}^n$ is \textbf{mean $(\alpha,\eta)$-porous} if, for every $x\in E$,
\[
\liminf_{n\rightarrow \infty} \frac{1}{n}|\{ i\in[n]: \por(E,x,2^{-i}) \ge\alpha   \}| \ge \eta,
\]
where $[n]=\{0,\ldots, n-1\}$. It is shown in \cite[Theorem 2.1]{KoskelaRohde97} that if $E$ is mean $(\alpha,\eta)$-porous, then
\begin{equation} \label{eq:dimension-mean-porous-sets}
\dim_P(E) \le d - c_d \cdot \eta \cdot \alpha^d,
\end{equation}
where $\dim_P$ denotes packing dimension (see e.g. \cite[\S 5.9]{Mattila95} for its definition), and $c_d$ depends only on the ambient dimension.\footnote{To be precise, in \cite{KoskelaRohde97} this is shown for upper Minkowski dimension under a uniform version of mean porosity; from here one deduces \eqref{eq:dimension-mean-porous-sets} by standard arguments.}  Koskela and Rohde also show that, other than for the value of the constant, this estimate is asymptotically sharp as $\alpha\to 0$.

In this article, we use probabilistic and dynamical ideas to obtain an analogue of the above result for measures rather than sets. The notion of porosity was extended to measures by Eckman, J\"{a}rvenp\"{a}\"{a} and J\"{a}rvenp\"{a}\"{a} in \cite{EJJ00}, and mean porosity of measures was introduced by Beliaev and Smirnov in \cite{BeliaevSmirnov02}.

Let $\mu$ be a Borel probability measure on $\R^d$. Given $x\in\supp\mu$, $r>0$ and $\e>0$, we let
\begin{equation} \label{eq:def-porosity}
\por(\mu,x,r,\e) = \sup\left\{ \alpha: \inf_{ B(y,\alpha r)\subset B(x,r) } \frac{\mu(B(y,\alpha r))}{\mu(B(x,r))} \le \e    \right\}.
\end{equation}
Thus, $\por(\mu,x,r,\e)$ is the largest (relative) size of an ``$\e$-hole'' in $B(x,r)$. The porosity of $\mu$ at a point $x$ is then defined as
\[
\por(\mu,x) =  \lim_{\e\to 0}\liminf_{r\to 0} \por(\mu,x,r,\e).
\]
Finally, the \textbf{porosity} of $\mu$ is the $\mu$-essential infimum of $\por(\mu,x)$.\footnote{Sometimes one takes the essential supremum instead; all our results continue to hold in this case, after replacing packing dimension by lower packing dimension.}

In this article we will be concerned with the more general notion of mean porosity. If $0<\alpha\le 1/2$, $0<\eta\le 1$ and $0\le \e<1$, we say that $\mu$ is \textbf{mean $(\alpha,\eta,\e)$-porous} if
\[
\liminf_{n\rightarrow \infty} \frac{1}{n}|\{i\in [n]:\por(\mu,x,2^{-i},\e)\ge \alpha\}| \ge \eta,
\]
for $\mu$-almost every $x$.

Finally, we say that $\mu$ is \textbf{mean $(\alpha,\eta)$-porous} if $\mu$ is mean $(\alpha,\eta,\e)$-porous for all $\e>0$.\footnote{We have followed the definition from \cite{BJJKRSS09}. Beliaev and Smirnov's original definition of mean porosity (\cite[\S 2.2]{BeliaevSmirnov02}) is slightly more general. The results we obtain continue to hold with their definition; see \S\ref{subsec:other-notions} below.}

Lebesgue measure is trivially $(\alpha,1,\alpha^d)$-porous for any $\alpha$. Thus for mean porosity to be a meaningful concept, we need $\e< \alpha^d$.  We will informally say that $\mu$ is \textbf{weakly mean porous} if it is mean $(\alpha,\eta,\e)$-porous for some $\e \ll \alpha^d$.

From a geometric perspective, mean porous measures are perhaps more natural; at the very least, letting $\e\rightarrow 0$ results in cleaner statements and proofs. On the other hand, as pointed out in \cite[Section 6]{BeliaevSmirnov02}, many measures of dynamical and analytical origin are weakly porous, but not porous (or mean porous). In this article we obtain bounds on the packing dimension of weakly mean porous and, by extension, also of mean porous and weakly porous measures. We first state our main result in the mean porous case, and discuss its significance. Afterward we state our more general result concerning weakly mean porous measures.

\subsection{Dimension bound for mean-porous measures}

The following is our main theorem regarding the dimension of mean porous measures:

\begin{theorem} \label{thm:main-result}
Suppose $\mu$ is a mean $(\alpha,\eta)$-porous Borel probability measure on $\R^d$. Then
\[
\dimp\mu\le d - c_d\cdot \eta\cdot \alpha^d,
\]
where
\[
c_d = \frac{2 } {5\log 2\cdot 2^{4d}\cdot d^{d/2}},
\]
and $\dimp(\mu)$ is the packing dimension of $\mu$. (See \S\ref{subsec:dimension} for its definition.)
\end{theorem}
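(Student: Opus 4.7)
The proposed approach is probabilistic, based on randomizing the dyadic grid so that the porosity condition on balls translates into a mass deficit on dyadic cubes, and then analyzing the resulting random dyadic martingale. Concretely, fix $k\in\N$ minimal with $2^{-k}\sqrt d<\alpha/4$, and for a uniform random translation $\omega\in[0,1)^d$ let $\DD_n^\omega$ denote the translated dyadic grid of side $2^{-n}$ and $Q_n^\omega(x)$ the cube of $\DD_n^\omega$ containing $x$. A direct volume calculation shows that for any ball $B(x,r)$ with $r\le 2^{-n-2}$, with probability at least some constant $p_0=p_0(d)>0$ over $\omega$, $B(x,r)\subset Q_n^\omega(x)$ and simultaneously any $\alpha$-hole $B(y,\alpha r)\subset B(x,r)$ contains the cube $Q_{n+k}^\omega(y)$. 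On this capture event, the porosity bound $\mu(B(y,\alpha r))\le\e\mu(B(x,r))$ yields nested dyadic cubes $Q'\subset Q$ with $x\in Q$, $Q'\in\DD_{n+k}^\omega$, and $\mu(Q')\le\e\mu(Q)$.

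Consider then the random dyadic martingale $Y_n^\omega(x)=-\log_2\mu(Q_n^\omega(x))$, whose increments lie in $[0,d]$. The conditional entropy of the mass distribution over the $2^{kd}$ descendants of $Q_n^\omega(x)$ at generation $n+k$ is trivially at most $kd$; at scales $n$ where the capture event holds, one descendant has relative mass $\le\e$, and the maximum entropy of any probability distribution with such a constrained atom is bounded above by $kd+\log_2(1-2^{-kd})+H_2(\e)\le kd-c'\alpha^d/d^{d/2}$ once $\e\ll\alpha^d/d^{d/2}$, using $2^{-kd}\asymp(\alpha/\sqrt d)^d=\alpha^d/d^{d/2}$. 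The mean-porosity hypothesis guarantees that at least a fraction $\eta$ of scales are porous for $\mu$-a.e.\ $x$; Fubini in $(x,\omega)$ combined with a greedy disjointification of capture-scale blocks yields the in-expectation bound $\EE_{\mu\times\omega}[Y_N^\omega]/N\le d-c_d\eta\alpha^d$ for $N$ large. Applying Azuma--Hoeffding to the martingale $Y_n^\omega-\EE[Y_n^\omega]$ (whose increments are bounded by $d$) upgrades this to the almost-sure statement $\limsup_N Y_N^\omega(x)/N\le d-c_d\eta\alpha^d$ for $\mu$-a.e.\ $x$ and almost every $\omega$; since on capture events one has $|Y_N^\omega(x)+\log_2\mu(B(x,2^{-N}))|=O(1)$, this controls the upper local dimension of $\mu$ at $\mu$-a.e.\ $x$ and hence $\dimp\mu$.

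The main obstacles are twofold. First, one must extract an entropy deficit linear in $\alpha^d$ rather than the weaker $\alpha^d/\log(1/\alpha)$ that a careless scale-by-scale bookkeeping would give; this requires a judicious organization of the block decomposition or a careful accounting of overlapping capture events. Second, one must pass from the in-expectation estimate above to an almost-sure bound with respect to $\mu$, via a Fubini argument that separates the randomizations over $\mu$ and $\omega$ together with a concentration inequality on the martingale. The accumulated constants from the ball-to-cube conversions (the factor $d^{d/2}$ from cube diameter versus side length, and the powers of $2^d$ from volume ratios) combine to yield the explicit $2^{4d}d^{d/2}$ denominator in $c_d$, while the $\ln 2$ from $-\log_2(1-\alpha^d/d^{d/2})\approx\alpha^d/(d^{d/2}\ln 2)$ accounts for the $\log 2$ factor.
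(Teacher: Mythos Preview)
Your reduction from ball-porosity to dyadic porosity via a random translation matches the paper's Lemma~\ref{lem:porosity-to-dyadic-porosity}, and expressing local dimension through averages of conditional entropies along the dyadic filtration is also the paper's framework (Theorem~\ref{thm:local-dim-entropy-Lyapunov} and Corollary~\ref{cor:bound-packing-dim-entropy-Lyapunov}). But you have correctly identified, and then not resolved, the central difficulty.

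Within the uniform dyadic martingale you describe, a single porous scale produces an entropy deficit of order $2^{-kd}$ spread over a block of $k$ levels. Any disjointification yields at most $\eta n/k$ porous blocks among $n$ scales, hence a per-scale dimension drop of order $\eta\, 2^{-kd}/k\asymp\eta\,\alpha^d/\log(1/\alpha)$. Your proposed fix of ``careful accounting of overlapping capture events'' does not recover the missing factor: if scales $i$ and $i+1$ are both porous, the two constrained-entropy bounds live on $k$-blocks sharing $k-1$ levels, and the deficits cannot simply be summed; no bookkeeping in the homogeneous grid removes the $1/k$. The paper's resolution is structurally different: at each porous cube $Q$ it uses a \emph{non-uniform} partition $\RR(Q)$ consisting of the small-mass cube $R\in\DD_k(Q)$ together with, for each $1\le j\le k$, the $2^d-1$ cubes of $\DD_j(Q)$ not containing $R$. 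Dimension is then estimated via the ratio $H(Q)/\lambda(Q)$ of entropy to the Lyapunov exponent $\lambda(Q)=\EE_{\mu^Q}\log(\ell(Q)/\ell(\cdot))$. The extremal measure for this ratio puts almost all mass on the level-$1$ subcubes, so $\lambda(Q)$ stays bounded independently of $k$, and one obtains $d-H(Q)/\lambda(Q)\ge c\,2^{-kd}$ with no $1/k$ loss (Lemma~\ref{lem:calculation-constant}). Without this variable-length-partition idea your scheme delivers only the weaker $\alpha^d/\log(1/\alpha)$ bound. (A minor separate point: $Y_n^\omega-\EE[Y_n^\omega]$ is not a martingale; the correct centered process is $Y_n^\omega-\sum_{i<n}H(\DD_i^\omega(x))$, and the paper passes to the almost-sure limit via the law of large numbers for martingale differences rather than Azuma--Hoeffding.)
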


We make some remarks regarding this statement.

\begin{remark}
Other than for the value of the constant $c_d$, the estimate given in the above theorem is sharp. In other words, there exists a constant $c'_d>0$ such that for any $\alpha\in (0,1/2)$ and $\eta\in (0,1]$ one can construct a measure $\mu$ which is mean $(\alpha,\eta)$-porous and satisfies
\[
\dim_P(\mu) = \dim_H(\mu) \ge d -c'_d\cdot \eta\cdot \alpha^d.
\]
(Here $\dim_H(\mu)$ denotes the Hausdorff dimension of a measure; see \S\ref{subsec:dimension}.) This will be clear from the proof of the theorem. Alternatively, one can modify the example in \cite{KoskelaRohde97} which shows that \eqref{eq:dimension-mean-porous-sets} is sharp in the case of sets.
\end{remark}

\begin{remark}
For porous measures, a similar result was obtained independently by K\"{a}enm\"{a}ki, Rajala and Suomala \cite[Theorem 5.6]{KRS09}. Their proof works in regular metric spaces, under some assumptions on the measure, but does not extend to mean porosity.
\end{remark}

\begin{remark}
For $\alpha$ close to $1/2$ (which is the maximum possible value, see \cite[Section 2]{EJJ00}), a much better bound for the packing dimension of mean porous measures was obtained in \cite[Theorem 3.1]{BJJKRSS09}. However, to the best of our knowledge, for small $\alpha$ and any $\eta\in (0,1)$, it was not even known whether a mean $(\alpha,\eta)$-porous measure always has dimension strictly smaller than the dimension of the ambient space.
\end{remark}

\begin{remark}
Theorem \ref{thm:main-result} is stated in \cite[Theorem 1]{BeliaevSmirnov02} (for a slightly larger class of mean porous measures; see \S\ref{subsec:other-notions}). However, as pointed out in \cite{BJJKRSS09}, the proof given in \cite{BeliaevSmirnov02} is not correct. The problem lies in the claim that mean porous measures can be approximated by mean porous sets (see \cite[Proposition 1]{BeliaevSmirnov02}); in \cite{BJJKRSS09} it is shown that this is not the case: there are mean porous measures that assign zero mass to all mean porous sets. Therefore, it does not seem possible to prove Theorem \ref{thm:main-result} by reducing it to the results of Koskela and Rohde \cite{KoskelaRohde97} for sets.
\end{remark}

\begin{remark}
On the other hand, the estimate \eqref{eq:dimension-mean-porous-sets}  for mean porous sets does follow from our results. This can be seen from the following two facts: \begin{itemize}
\item[(a)] Any Borel set of packing dimension $\alpha$ supports a Borel probability measure of packing dimension at least $\alpha-\delta$ for any $\delta>0$ (see \cite{Cutler95}),
\item[(b)] If the support of a measure $\mu$ is mean $(\alpha,\eta)$-porous, then so is the measure $\mu$. This is obvious from the definitions.
\end{itemize}
\end{remark}

\subsection{Dimension bounds for weakly porous measures}

For many natural fractal measures, such as measures arising from hyperbolic dynamical systems, mean porosity is too strong of a condition. However, such measures are often mean $(\alpha,\eta,\e)$-porous for a suitable small, but positive, $\e$. See the discussion in \cite[Section 6]{BeliaevSmirnov02}. The next theorem shows that if $\e$ is smaller than some explicit multiple of $\alpha^d$, then the packing dimension of any mean $(\alpha,\eta,\e)$-porous measure is strictly smaller than the ambient dimension $d$. For sufficiently small $\e$, we recover the bound in Theorem \ref{thm:main-result}.

\begin{theorem} \label{thm:main-result-general}
Fix an ambient dimension $d$ and  $\alpha > 0$. There exists a continuous function $t_{d,\alpha}(\e)$ with the following properties:
\renewcommand{\theenumi}{\roman{enumi}}
\begin{enumerate}
\item \label{enum:positive-function} $t_{d,\alpha}(\e) > 0$ whenever $\e < 2^{-2d}d^{-d/2} \alpha^d$.
\item \label{enum:lim-as-epsilon-0} If $\e$ is small enough (depending on $d,\alpha$), then $t_{d,\alpha}(\e) > c_d\cdot \alpha^d$,
where $c_d$ is the constant in Theorem \ref{thm:main-result}. In particular, this holds for $\e=0$.
\item \label{enum:dimension-bound} If $\mu$ is mean $(\alpha,\eta,\e)$-porous, then
\[
\dim_P(\mu) \le d- \eta \cdot t_{d,\alpha}(\e).
\]
\end{enumerate}
\end{theorem}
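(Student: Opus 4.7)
The plan is to bound $\dimp \mu$ by controlling the conditional Shannon entropies of $\mu$ along the dyadic filtration and invoking a martingale strong law to pass from expectations to almost-sure averages. If $Q_n(x)$ denotes the dyadic cube of side $2^{-n}$ containing $x$, then $\dimp\mu$ equals the $\mu$-essential supremum of $\limsup_n (-\log_2 \mu(Q_n(x)))/n$. Telescoping gives
\[
-\log_2\mu(Q_n(x)) = \sum_{i=0}^{n-1}\log_2\frac{\mu(Q_i(x))}{\mu(Q_{i+1}(x))},
\]
a sum of $[0,d]$-valued random variables. Writing $h_i(x)$ for the Shannon entropy (base $2$) of the distribution of $\mu/\mu(Q_i(x))$ on the $2^d$ children of $Q_i(x)$, the differences $\log_2(\mu(Q_i(x))/\mu(Q_{i+1}(x))) - h_i(x)$ form a bounded martingale difference sequence, so the martingale SLLN yields
\[
\limsup_{n\to\infty}\frac{-\log_2\mu(Q_n(x))}{n} = \limsup_{n\to\infty}\frac{1}{n}\sum_{i=0}^{n-1}h_i(x)\quad\text{for $\mu$-a.e. }x.
\]
It therefore suffices to control these Ces\`aro averages.

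The central analytic input is an entropy-drop lemma: there exist an integer $k$ of order $\log_2(\sqrt{d}/\alpha)$ and a continuous function $t_{d,\alpha}(\e)$ with properties (i) and (ii) of the theorem, such that whenever $\por(\mu,x,2^{-n},\e)\ge\alpha$ one has $\frac{1}{k}(h_n(x)+\cdots+h_{n+k-1}(x))\le d - t_{d,\alpha}(\e)$. The mechanism is as follows: a hole $B(y,\alpha\cdot 2^{-n})\subset B(x,2^{-n})$ of $\mu$-mass at most $\e\,\mu(B(x,2^{-n}))$ contains a full dyadic sub-cube $Q^\ast$ of side $2^{-n-k}$ by the choice of $k$; after absorbing the ball-versus-cube passage into a dimensional constant (using that $B(x,2^{-n})$ is covered by $O_d(1)$ level-$n$ cubes), the mass of $Q^\ast$ relative to a relevant level-$n$ ancestor is at most a controlled multiple of $\e$. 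Since the uniform baseline for a single level-$(n+k)$ descendant is $2^{-kd}\approx d^{-d/2}\alpha^d$, a Jensen-type convexity estimate on Shannon entropy converts this imbalance into an entropy deficit $t_{d,\alpha}(\e)$ that is strictly positive precisely when $\e$ is below a dimensional multiple of $d^{-d/2}\alpha^d$, and that tends to $c_d\,\alpha^d$ as $\e\to 0$.

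Given the lemma, the mean-porosity hypothesis guarantees that, for $\mu$-a.e. $x$, the set of scales $i$ with $\por(\mu,x,2^{-i},\e)\ge\alpha$ has lower density at least $\eta$. Grouping the first $n$ scales into consecutive blocks of length $k$ and applying the entropy-drop lemma on blocks that meet the porous set (while using the trivial bound $h_i\le d$ on the remaining blocks) yields
\[
\limsup_{n\to\infty}\frac{1}{n}\sum_{i=0}^{n-1}h_i(x) \le d - \eta\cdot t_{d,\alpha}(\e),
\]
which combined with the first reduction gives $\dimp\mu\le d - \eta\cdot t_{d,\alpha}(\e)$.

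The main obstacle, I expect, lies in establishing the entropy-drop lemma with the precise threshold $\e < 2^{-2d}d^{-d/2}\alpha^d$ in (i) and the sharp asymptotic $t_{d,\alpha}(\e)\ge c_d\,\alpha^d$ as $\e\to 0$ in (ii). The delicate points are handling the ball-to-cube passage without losing factors that would push the threshold past the stated one (perhaps via averaging over shifted dyadic grids, or by a direct estimate on the mass ratio $\mu(B(y,\alpha r))/\mu(Q_n(x))$), fixing the correct value of $k$, and extracting the entropy deficit from the unbalanced child distribution in a form that degrades continuously as $\e$ approaches the critical value.
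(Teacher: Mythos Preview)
Your overall architecture---reduce to local dimensions along a dyadic filtration, telescope, replace increments by conditional entropies via the martingale SLLN, then feed in porosity as an entropy deficit---is exactly the paper's strategy. The ball-to-cube passage you sketch is also close to what the paper does (it uses a random translation, which is your ``averaging over shifted dyadic grids'' made precise, to ensure that $B(x,r2^{-n})\subset Q_n(x)$ at a positive density of scales; see Lemma~\ref{lem:porosity-to-dyadic-porosity}).

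The genuine gap is the entropy-drop lemma itself. As you state it, it is false for $k>1$. Take $d=1$, $k=2$, and suppose $Q_n(x)$ has the two children $L,R$ with $\mu(L)=\mu(R)=\tfrac12$; inside $R$ split evenly, and inside $L$ put mass $\e$ on one grandchild and $\tfrac12-\e$ on the other. Then $Q_n(x)$ has a level-$(n+2)$ descendant of relative mass $\e$, so it is ``porous'', yet for every $x\in R$ one has $h_n(x)+h_{n+1}(x)=1+1=2=kd$: no drop at all. The point is that $h_{n+1}(x)$ depends on which child of $Q_n(x)$ contains $x$, and the light descendant only depresses the entropy along the branch that leads to it, not along the branch through $x$. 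What \emph{is} true is that the $\mu$-average of $h_n+\cdots+h_{n+k-1}$ over $Q_n(x)$ drops; but that averaged statement cannot be plugged into your pointwise Ces\`aro argument. (With $k=1$ your lemma is correct, since $h_n(x)$ depends only on $Q_n(x)$; but $k=1$ forces a base $b\sim 1/\alpha$, and the resulting drop is only of order $\alpha^d/\log(1/\alpha)$, not the sharp $\alpha^d$.)

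The paper's fix is precisely to abandon the uniform dyadic tree at porous cubes. When $Q$ is porous with light descendant $R\in\DD_k(Q)$, one takes as offspring of $Q$ the cube $R$ itself together with, for each $1\le j\le k$, the $2^d-1$ cubes in $\DD_j(Q)$ that do not contain $R$ (Figure~\ref{fig-tree}). This partition has elements of different sizes, so one must track both the entropy $H(Q)$ and a Lyapunov exponent $\lambda(Q)$ recording the expected $\log$ of the size ratio; the relevant quantity becomes $H(Q)/\lambda(Q)$, and Lemma~\ref{lem:calculation-constant} shows this ratio is at most $s_{d,k}(\e)<d$ for every such $Q$, \emph{pointwise}. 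The non-uniform tree is what rescues the pointwise bound: by isolating the hole as a single offspring, every path through $Q$ sees the deficit. The paper even flags this explicitly (end of \S1): uniform partitions suffice to get some dimension drop, but the non-uniform tree is needed for the sharp asymptotic.
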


Several remarks are in order.

\begin{remark} It will emerge from the proof that the function $t_{d,\alpha}$ can be determined explicitly. Indeed, we will see that
\[
t_{d,\alpha}(\e) =  2^{-d}t_{d,k}(\e),
\]
where $t_{d,k}$ is the function defined in Theorem \ref{thm:main-result-dyadic}, and $k=k(\alpha)$ is given by \eqref{eq:k-of-alpha-expl}.
\end{remark}

\begin{remark}
Note that Theorem \ref{thm:main-result} is an immediate corollary of parts \eqref{enum:lim-as-epsilon-0} and \eqref{enum:dimension-bound} in Theorem \ref{thm:main-result-general}.
\end{remark}

Even though Theorem \ref{thm:main-result-general} is a deterministic geometric result, our proof uses ideas from probability and dynamics. A first step is to convert the problem into a dyadic version which is more amenable to analysis. This is standard in geometric measure theory; here we obtain the dyadic analogue by means of a random translation (purely geometric arguments are often more complicated).

The key idea of the proof is to use trees and martingales to express the local dimension of a measure $\mu$ in terms of averages of entropies and Lyapunov exponents (suitably defined) as one zooms in towards a $\mu$-typical point. This idea was introduced (in a different context) in \cite{HochmanShmerkin09} to study the dimension of projected measures. A crucial difference with \cite{HochmanShmerkin09} is that here we need to consider dyadic partitions consisting of cubes of many different sizes (See Section \ref{sec:notation}). While dealing with uniform partitions is enough to show that the packing dimension of mean porous measures is strictly smaller than the dimension of the ambient space, in order to get the sharp asymptotics more general dyadic trees appear to be needed.

This semi-local approach has two crucial advantages. Firstly, porosity at a given scale results in an entropy drop at that scale, so that there is a natural link between porosity and dimension (via entropy averages). Secondly, since we average over all scales, this method is naturally suited to handle mean porosity, rather than just porosity.

\subsection{Organization of the paper} The paper is organized as follows. In Section \ref{sec:notation} we introduce our general setting and review some concepts of dimension of a measure. Section \ref{sec:martingales} describes a general method to estimate dimension which will be key in our later proofs. In Section \ref{sec:dyadic-porosity} we define dyadic analogues of mean porosity, state Theorem \ref{thm:main-result-dyadic}, which is an explicit dyadic version of our estimates, and deduce Theorem \ref{thm:main-result-general}. The proof of Theorem \ref{thm:main-result-dyadic} is given in Section \ref{sec:proof-of-theorem}. We conclude with examples, generalizations and open questions in Section \ref{sec:conclusion}.

\section{Notation and preliminaries} \label{sec:notation}

\subsection{Cube partitions and trees}

We fix an ambient dimension $d$ for the rest of the article. By a cube we always mean a half-open cube in $\R^d$ with sides parallel to the axes. The side length of a cube $Q$ will be denoted $\ell(Q)$.

Our main objects of interest will be trees of cubes formed by repeated subdivision. A \textbf{cube partition} of a cube $Q$ is a pairwise disjoint collection of cubes $\mathcal{R}=\{R_0,\ldots, R_{N-1}\}$ with $Q = \bigcup_{i\in [N]} R_i$. The cube partition $\mathcal{R}$ is \textbf{$\delta$-regular} if
\[
\delta \le \frac{\ell(R_i)}{\ell(Q)} \le 1-\delta\quad \text{for all } i\in [N].
\]
Suppose that to each cube $Q$ is associated a cube partition
\[
\mathcal{R}(Q) = \{R^Q_0,\ldots, R_{N_Q-1}^Q\}.
\]
Starting with the unit cube $[0,1)^d$, we can then form a tree $\RR^*$ by repeated subdivision of $Q$ into $\mathcal{R}(Q)$. (Of course, the partition needs not be defined for all cubes, but just for cubes which appear in the tree.) The tree is called $\delta$-regular if all partitions involved are $\delta$-regular. In this case, the degree of all vertices is uniformly bounded. The simplest such trees are the ones generated by subdivision into $b$-adic cubes for some $b\ge 2$, but we will need the more general construction.

Abusing notation, we will also denote the set of vertices of the tree by $\RR^*$. The collection of cubes at tree distance $n$ from the root $[0,1)^d$ will be denoted $\RR_n$. Furthermore, given $x\in [0,1)^d$, we will let $\RR_n(x)$ be the only cube in $\RR_n$ containing $x$.

A trivial but important fact is that, if $\sigma(\RR_n)$ is the $\sigma$-algebra generated by the elements of $\RR_n$, then the collection $\{ \sigma(\RR_n)\}$ is a filtration of $\sigma$-algebras which generates the Borel $\sigma$-algebra of $[0,1)^d$.

\subsection{Measures, entropy and Lyapunov exponents}

In this article it is understood that, absent explicit mention, all measures are Borel probability measures on $[0,1)^d$. Let $\RR^*$ be a tree as in the previous section. A measure $\mu$ induces a discrete probability measure $\mu^Q$ at each vertex $Q\in\RR^*$ such that $\mu(Q)>0$, supported on the offspring set $\RR(Q)$. Namely, if $R\in\RR(Q)$, then
\[
\mu^Q(R)  = \frac{\mu(R)}{\mu(Q)}.
\]
Conversely, any collection $\{\mu^Q\}$ of probability measures supported on $\RR(Q)$ gives rise to a Borel probability measure $\mu$. Note that if $Q_n\in\RR_n$ and $[0,1)^d=Q_0,\ldots, Q_{n-1}$ is the lineage of $Q_n$, then
\begin{equation} \label{eq:meas-cube-as-product-of-cond-meas}
\mu(Q_{n}) = \prod_{i=0}^{n-1} \mu^{Q_i}(Q_{i+1}).
\end{equation}
Suppose $Q\in \RR^*$ and $f$ is a function defined on the offspring set $\RR(Q)$. We write $\EE_{\mu^Q}$ for the expectation of $f$; explicitly,
\[
\EE_{\mu^Q}(f) = \sum_{R\in\RR(Q)} f(R)\, \mu^Q(R) .
\]

Given $Q\in\RR^*$, we define the \textbf{entropy} $H(Q)$ to be the entropy of the measure $\mu^Q$:
\[
H(Q)  = \EE_{\mu^Q}(-\log(\mu^Q(\cdot))) = \sum_{R\in\RR(Q)} -\log(\mu^Q(R)) \mu^Q(R).
\]
(Here we follow the usual convention $0\log 0=0$.) This quantity measures how ``spread out'' $\mu^Q$ is. However, because squares in $\RR(Q)$ may have different sizes, entropy alone is not a good characterization of the geometric ``size'' of $\mu^Q$. To this end we introduce the \textbf{Lyapunov exponent} $\lambda(Q)$. First, for $R\in \RR(Q)$, write
\[
\ell_Q(R) = \frac{\ell(Q)}{\ell(R)}.
\]
for the (inverse) relative length of the offspring cube. Then set
\[
\lambda(Q)  = \EE_{\mu^Q}(\log\ell_Q(\cdot)) = \sum_{R\in\RR(Q)}  \log(\ell_Q(R)) \,\mu(R).
\]

The quotient $H(Q)/\lambda(Q)$ is a discrete notion of dimension for $\mu^Q$. It is always bounded between $0$ and $d$; it is $0$ only when all mass is concentrated on one cube $R\in\RR(Q)$, and it is $d$ only when the mass of each $R$ is its (relative) Lebesgue measure.

\subsection{Dimensions of a measure} \label{subsec:dimension}

Recall that the \textbf{upper local dimension} of a measure $\mu$ at a point $x\in [0,1)^d$, denoted $\overline{\dim}(\mu,x)$, is defined as
\[
\overline{\dim}(\mu,x) = \limsup_{r\to 0} \frac{\log \mu(B(x,r))}{\log r},
\]
where $B(x,r)$ denotes the closed ball of radius $r$ centred at $x$. The (upper) \textbf{packing dimension} $\dim_P(\mu)$ is the essential supremum of the upper local dimensions:
\begin{equation} \label{eq:def-packing-dim}
\dim_P(\mu) = \mu\text{-}\esssup \overline{\dim}(\mu,x).
\end{equation}
Alternatively, $\dim_P(\mu) = \sup\{ \dim_P(A): \mu(A)>0\}$, where $\dim_P(A)$ is the packing dimension of the set $A$ (see \cite{Cutler95}).

For completeness, let us mention that the \textbf{Hausdorff dimension} of the measure $\mu$, denoted $\dim_H(\mu)$, is the essential supremum of the \emph{lower} local dimensions (defined in the obvious way). It coincides with $\inf\{\dim_H(A):\mu(A)>0\}$, where $\dim_H(A)$ denotes the Hausdorff dimension of the set $A$. Clearly, $\dim_H(\mu) \le \dim_P(\mu)$ in general, and strict inequality is possible. In particular, any upper bounds we prove for $\dim_P(\mu)$ are also upper bounds for $\dim_H(\mu)$.

We will be interested in obtaining upper bounds for the packing dimension, and thus we need upper bounds for the local dimension. The next lemma shows that it is enough to consider cubes in $\RR^*$ instead of balls.

\begin{lemma} \label{lem:dyadic-local-dim}
Suppose $\RR^*$ is $\delta$-regular for some $\delta>0$. Then
\[
\dim_P(\mu)= \mu\text{-}\esssup \limsup_{n\rightarrow\infty} \frac{ -\log\mu(\RR_n(x)) }{ \log\ell(\RR_n(x)) }.
\]
\end{lemma}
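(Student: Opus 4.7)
The plan is to prove both inequalities. Write $\overline d_\mu(x):=\limsup_{r\to 0}\log\mu(B(x,r))/\log r=\overline{\dim}(\mu,x)$, so that by \eqref{eq:def-packing-dim} $\dimp\mu=\mu\text{-}\esssup\overline d_\mu$, and $\wt D(x):=\limsup_n\log\mu(\RR_n(x))/\log\ell(\RR_n(x))$ for the right-hand side of the lemma.

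For the easy direction $\dimp\mu\le\esssup\wt D$ I would establish the pointwise bound $\overline d_\mu(x)\le\wt D(x)$ on $\supp\mu$. Given small $r>0$, pick $n$ with $\sqrt d\,\ell(\RR_n(x))\le r<\sqrt d\,\ell(\RR_{n-1}(x))$, which exists since $\delta$-regularity forces $\ell(\RR_n(x))\le(1-\delta)^n\to 0$; the same regularity gives $\ell(\RR_{n-1}(x))\le\delta^{-1}\ell(\RR_n(x))$, hence $\log r=\log\ell(\RR_n(x))+O_{d,\delta}(1)$. Because $\RR_n(x)$ has diameter $\le r$ and contains $x$, $\RR_n(x)\subseteq B(x,r)$, so $\mu(B(x,r))\ge\mu(\RR_n(x))$; taking $\log$'s, dividing by $\log r<0$, and passing to $\limsup$ yields the bound.

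For the reverse $\esssup\wt D\le\dimp\mu$, I would use the alternative characterization $\dimp\mu=\sup\{\dimp A:\mu(A)>0\}$ (stated just after \eqref{eq:def-packing-dim}) combined with Tricot's decomposition $\dimp A=\inf\{\sup_i\udimb F_i:A\subseteq\bigcup_i F_i\}$. Fix $s<\esssup\wt D$, put $A:=\{x:\wt D(x)>s\}$; then $\mu(A)>0$ and each $x\in A$ has $\mu(\RR_n(x))<\ell(\RR_n(x))^s$ for infinitely many $n$. By Tricot it suffices to show $\udimb F\ge s$ for every $F\subseteq A$ with $\mu(F)>0$. Let $\QQ_r\subseteq\RR^*$ be the scale-$r$ partition consisting of the cubes $Q$ maximal under $\ell(Q)\le r$; by $\delta$-regularity $\delta r<\ell(Q)\le r$. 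Set $r_j:=(1-\delta)^j$. For each $n$ with $\mu(\RR_n(x))<\ell(\RR_n(x))^s$, the interval $[\ell(\RR_n(x)),\ell(\RR_{n-1}(x)))$ has multiplicative length $\ge(1-\delta)^{-1}$, so contains some $r_j$, and then $Q_{r_j}(x)=\RR_n(x)$ gives $\mu(Q_{r_j}(x))<r_j^s$. Writing $E_j:=\{y:\mu(Q_{r_j}(y))<r_j^s\}$ thus yields $A\subseteq\limsup_j E_j$. Fix $\eta>0$: if $\mu(F\cap E_j)\le r_j^\eta$ for all large $j$, then $\sum_j\mu(F\cap E_j)<\infty$, and Borel--Cantelli forces $\mu(F\cap\limsup_j E_j)=0$, contradicting $\mu(F)>0$; hence $\mu(F\cap E_j)>r_j^\eta$ along an infinite subsequence. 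For each such $j$, the pairwise disjoint cubes of $\QQ_{r_j}$ meeting $F\cap E_j$, each of $\mu$-mass $<r_j^s$, number at least $\mu(F\cap E_j)\,r_j^{-s}>r_j^{\eta-s}$. Any set of diameter $\le\delta r_j/2$ meets at most $2^d$ cubes of $\QQ_{r_j}$, so $N(F,\delta r_j/2)\ge 2^{-d}r_j^{\eta-s}$; dividing $\log N$ by $-\log(\delta r_j/2)$ and letting $j\to\infty$ along the subsequence gives $\udimb F\ge s-\eta$, and $\eta\downarrow 0$ finishes the argument.

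The main obstacle is this Borel--Cantelli extraction: the $\mu$-a.s.\ statement ``$x\in E_j$ infinitely often'' does not automatically supply a subsequence on which $\mu(F\cap E_j)$ is bounded below, and one must exploit the geometric summability $\sum_j r_j^\eta<\infty$ to rule out fast decay. Modulo this step the argument is routine bookkeeping on cube sizes, comparing $\QQ_r$-cubes to balls via $\delta$-regularity.
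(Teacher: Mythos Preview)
Your proof is correct, and the worry you flag as ``the main obstacle'' is in fact not an obstacle at all: the Borel--Cantelli step works exactly as you wrote it, since $F\subseteq A\subseteq\limsup_j E_j$ gives $F=\limsup_j(F\cap E_j)$, and summability of $r_j^\eta$ forces $\mu(F)=0$ unless $\mu(F\cap E_j)>r_j^\eta$ infinitely often. Your counting claim that a set of diameter $\le\delta r_j/2$ meets at most $2^d$ cubes of $\QQ_{r_j}$ is also right, though it deserves a word of justification: wlog the set sits in $[0,\rho)^d$ with $\rho=\delta r_j$; any cube $\prod_i[c_i,c_i+\ell)$ with $\ell>\rho$ meeting it has, for each $i$, either $c_i<0$ or $c_i\in[0,\rho)$, giving $2^d$ ``types''; two cubes of the same type both contain the point whose $i$-th coordinate is $0$ or $\rho$ accordingly, contradicting disjointness.

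Your route differs from the paper's. The paper does not argue the lower inequality directly: it observes that a stopping-time construction turns the $\RR_n$ into genuine $(1-\delta)^n$-partitions $\widetilde\RR_n$ (each element of $\widetilde\RR_n$ lying in some $\RR_k$), and then invokes \cite[Theorem~B.1]{KRS09}, which handles such partitions in one stroke. The easy inequality is dispatched in a remark, exactly as you do. So the paper's proof is shorter but outsources the substance to an external reference, while yours is self-contained: the Tricot decomposition plus the Borel--Cantelli extraction of a good subsequence of scales is a clean direct argument that avoids any black box. Either approach is perfectly adequate here; yours has the advantage of being readable without chasing a citation.
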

\begin{proof}
This follows from \cite[Theorem B.1]{KRS09}. Note that although $\RR_n$ are not $\delta_n$-partitions (in the terminology of \cite{KRS09}), a simple stopping time argument together with $\delta$-regularity can be used to construct $(1-\delta)^n$-partitions $\widetilde{\RR}_n$, such that any element of $\widetilde{\RR}_n$ is in $\RR_k$ for some $k\in [n]$. We can then apply \cite[Theorem B.1]{KRS09} to these partitions and observe that the lemma follows immediately.
\end{proof}

\begin{remark}
The upper bound in the lemma, which is all we need for the proofs of our main result, is elementary: it follows from $\delta$-regularity and the fact that $\RR_k(x)\subset B(x,\sqrt{d}\,\ell(\RR_k(x)))$.
\end{remark}

\section{Martingales and estimation of local dimension} \label{sec:martingales}

In this short section we prove a theorem that will be key in our dimension estimates. It is a variant of \cite[Lemma 4.2]{HochmanShmerkin09} (where only $b$-adic trees were considered; in particular, Lyapunov exponents do not arise in that context). Although the proof is a direct application of the law of large numbers for martingale differences, the theorem can be effective in a wide variety of situations.

\begin{theorem} \label{thm:local-dim-entropy-Lyapunov}
Suppose $\RR^*$ is $\delta$-regular for some $\delta>0$. Then for $\mu$-almost every $x$,
\begin{align}
\lim_{n\rightarrow\infty} \frac{1}{n} \left(-\log\mu(\RR_n(x)) -\sum_{i=0}^{n-1} H(\RR_i(x))\right) &= 0, \label{LLN-entropy}\\
\lim_{n\rightarrow\infty} \frac{1}{n} \left(\log\ell(\RR_n(x)) - \sum_{i=0}^{n-1} \lambda(\RR_{i}(x))  \right) &= 0. \label{LLN-length}
\end{align}
\end{theorem}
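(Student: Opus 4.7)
The plan is to recognize each of \eqref{LLN-entropy} and \eqref{LLN-length} as the strong law of large numbers applied to a martingale difference sequence adapted to the filtration $\FF_n := \sigma(\RR_n)$. The two arguments are entirely parallel, differing only in how the $L^2$ (or $L^\infty$) control on the differences is obtained.

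For \eqref{LLN-entropy} I would set
\[
X_i(x) := -\log \mu^{\RR_i(x)}\bigl(\RR_{i+1}(x)\bigr).
\]
The identity \eqref{eq:meas-cube-as-product-of-cond-meas} telescopes to $-\log\mu(\RR_n(x)) = \sum_{i=0}^{n-1} X_i(x)$, while the definition of $H$ gives $\EE[X_i \mid \FF_i](x) = H(\RR_i(x))$. Hence $Y_i := X_i - H(\RR_i(\cdot))$ is a martingale difference sequence for $\{\FF_n\}$, and \eqref{LLN-entropy} becomes exactly $n^{-1}\sum_{i=0}^{n-1} Y_i \to 0$ $\mu$-a.s. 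This is where $\delta$-regularity enters: each partition $\RR(Q)$ has at most $\delta^{-d}$ cubes, and the elementary inequality $p(\log p)^2 \le 4e^{-2}$ for $p\in[0,1]$ yields
\[
\EE[X_i^2 \mid \FF_i](x) = \sum_{R\in\RR(\RR_i(x))} \mu^{\RR_i(x)}(R)\bigl(\log\mu^{\RR_i(x)}(R)\bigr)^2 \le 4\,\delta^{-d}e^{-2},
\]
so $\sup_i \EE[Y_i^2] < \infty$ and Chow's martingale SLLN (or Kolmogorov's criterion for orthogonal increments) finishes this part.

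For \eqref{LLN-length}, define $Z_i(x) := \log \ell_{\RR_i(x)}(\RR_{i+1}(x))$. The definition of $\ell_Q$ together with $\ell([0,1)^d)=1$ gives the telescoping identity $\sum_{i=0}^{n-1} Z_i(x) = -\log \ell(\RR_n(x))$, and $\EE[Z_i \mid \FF_i](x) = \lambda(\RR_i(x))$ directly from the definition of $\lambda$. Here $\delta$-regularity yields the pointwise bound $0 \le Z_i \le \log(1/\delta)$, so $Z_i - \lambda(\RR_i(\cdot))$ is a uniformly bounded martingale difference sequence. The standard SLLN (or Azuma--Hoeffding combined with Borel--Cantelli) immediately gives $n^{-1}\sum_{i=0}^{n-1}(Z_i - \lambda(\RR_i)) \to 0$ $\mu$-a.s., and together with the telescoping identity this yields \eqref{LLN-length}.

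The only nontrivial point is the $L^2$ variance bound in the entropy step, since $X_i$ itself is not pointwise bounded (very small values of $\mu^{\RR_i}(R)$ can occur). Once $\delta$-regularity is combined with the elementary bound on $p(\log p)^2$ to tame this, however, the proof reduces to an off-the-shelf application of the martingale SLLN, and no further obstacle arises.
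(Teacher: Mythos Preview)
Your proposal is correct and follows essentially the same route as the paper: the paper defines exactly your $X_i$ and $Z_i$ (with a shifted index), observes that they form martingale difference sequences after centering by $H$ and $\lambda$ respectively, and invokes the law of large numbers for $L^2$-bounded martingale differences. The only difference is that the paper simply asserts the uniform $L^2$ bound, whereas you spell out the mechanism (at most $\delta^{-d}$ offspring combined with $p(\log p)^2\le 4e^{-2}$), which is a welcome addition of detail rather than a different argument.
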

\begin{proof}
Define a sequence of functions $\{I_n\}_{n=1}^\infty$ on $[0,1)^d$ by
\[
I_n(x) = -\log \mu^{\RR_{n-1}(x)}(\RR_n(x)).
\]
It is easy to check that $I_n$ is $\RR_n$-measurable and uniformly bounded in $L^2(\mu)$. Moreover, $\EE(I_n|\RR_{n-1})=H(\RR_{n-1}(x))$. Thus
\[
\{ I_n(x) - H(\RR_{n-1}(x)) \}_{n=1}^\infty
\]
is a sequence of uniformly $L^2$-bounded martingale differences, whence by the law of large numbers for martingale differences (see \cite[Theorem 3 in Section 9 of Chapter 7]{Feller71}),
\[
\lim_{n\rightarrow\infty} \frac{1}{n} \sum_{i=1}^n \left( I_i(x) -H(\RR_{i-1}(x))\right) = 0 \quad \mu\textrm{-a.e.}
\]
But $\sum_{i=1}^n I_i(x) = -\log\mu(\RR_n(x))$ by \eqref{eq:meas-cube-as-product-of-cond-meas}, so we obtain \eqref{LLN-entropy}.

A similar argument, applied to the sequence of functions
\[
L_n(x) = \log\ell_{\RR_{n-1}(x)}(\RR_n(x)),
\]
yields \eqref{LLN-length}.
\end{proof}

As a consequence, we obtain the following corollary, which will be the key for estimating the packing dimension of porous measures.

\begin{corollary} \label{cor:bound-packing-dim-entropy-Lyapunov}
If $\RR^*$ is $\delta$-regular for some $\delta>0$, then
\[
\dim_P\mu = \mu\text{-}\esssup \limsup_{n\rightarrow\infty} \frac{\sum_{i=0}^{n-1} H(\RR_i(x))}{\log\ell(\RR_n(x))} .
\]
\end{corollary}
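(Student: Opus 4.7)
The plan is to combine Lemma \ref{lem:dyadic-local-dim} with the first identity \eqref{LLN-entropy} in Theorem \ref{thm:local-dim-entropy-Lyapunov}. The lemma reduces $\dim_P\mu$ to $\mu\text{-}\esssup_x \limsup_n [-\log\mu(\RR_n(x))]/[\log\ell(\RR_n(x))]$, and \eqref{LLN-entropy} allows us to replace the numerator $-\log\mu(\RR_n(x))$ by the entropy sum $\sum_{i=0}^{n-1} H(\RR_i(x))$, up to an additive error that is $o(n)$ $\mu$-a.s. Once this substitution is made, it only remains to check that the error is absorbed after dividing by $\log\ell(\RR_n(x))$.

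The key ingredient for this cancellation is a quantitative linear lower bound on $|\log\ell(\RR_n(x))|$ coming from $\delta$-regularity. Since each step of the tree rescales the side length by a factor in $[\delta, 1-\delta]$, iteration gives
\[
\delta^n \le \ell(\RR_n(x)) \le (1-\delta)^n
\]
for every $x \in [0,1)^d$ and every $n \ge 0$. In particular, $|\log\ell(\RR_n(x))| \ge n\log(1/(1-\delta))$ grows at least linearly in $n$, uniformly in $x$.

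Now fix an $x$ in the full-$\mu$-measure set on which \eqref{LLN-entropy} holds, and write
\[
-\log\mu(\RR_n(x)) = \sum_{i=0}^{n-1} H(\RR_i(x)) + \eta_n(x), \qquad \eta_n(x)/n \to 0.
\]
Dividing by $\log\ell(\RR_n(x))$ gives
\[
\frac{-\log\mu(\RR_n(x))}{\log\ell(\RR_n(x))} = \frac{\sum_{i=0}^{n-1} H(\RR_i(x))}{\log\ell(\RR_n(x))} + \frac{\eta_n(x)}{\log\ell(\RR_n(x))},
\]
and the last term tends to $0$ by the previous paragraph. Hence the two sequences share the same $\limsup$ as $n\to\infty$ for $\mu$-almost every $x$, and taking $\mu$-essential suprema while invoking Lemma \ref{lem:dyadic-local-dim} delivers the claimed identity.

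The argument presents no serious obstacle; the only delicate point is that the error in \eqref{LLN-entropy} is merely $o(n)$, potentially linear in $n$ along subsequences, so one genuinely needs the linear lower bound on $|\log\ell(\RR_n(x))|$ supplied by $\delta$-regularity — without it, division by the denominator need not produce an $o(1)$ remainder, and the substitution could perturb the $\limsup$ uncontrollably.
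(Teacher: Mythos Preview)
Your argument is correct and matches the paper's approach: invoke Lemma~\ref{lem:dyadic-local-dim} and \eqref{LLN-entropy}, then use $\delta$-regularity to show the $o(n)$ error vanishes after dividing by $|\log\ell(\RR_n(x))|$. In fact you isolate the relevant inequality --- $\ell(\RR_n(x))\le(1-\delta)^n$, giving the linear lower bound on the denominator --- whereas the paper's one-line proof cites $\ell(\RR_n(x))\ge\delta^n$, which is the other half of $\delta$-regularity and not the one actually needed here.
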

\begin{proof}
Since $\ell(\RR_n(x)) \ge \delta^n$, this is immediate from Lemma \ref{lem:dyadic-local-dim} and Theorem \ref{thm:local-dim-entropy-Lyapunov}.
\end{proof}

\begin{remark}
By the other part of Theorem \ref{thm:local-dim-entropy-Lyapunov}, the corollary still holds if the denominator is replaced by $\frac1n\sum_{i\in [n]} \lambda(\RR_n(x))$. However, we will require the less symmetric version above.
\end{remark}

\section{From porosity to dyadic porosity} \label{sec:dyadic-porosity}

The definition of mean porosity is in terms of Euclidean balls. In order to apply the machinery developed in the previous sections, we need an analogue involving dyadic cubes. We will denote by $\DD^*$ the tree generated by partitioning each dyadic cube into the dyadic sub-cubes of the next level. If $Q\in \DD_n$ and $k\ge 1$, we will write
\[
\DD_k(Q) = \{ R: \in \DD_{n+k}: R\subset Q\}.
\]
Given a measure $\mu$, $x\in [0,1)^d$, $n\ge 1$ and $0\le \e<1$, we let
\[
\por_2(\mu,x,n,\e) = \min\left\{ k: \min_{R\in \DD_k(\DD_n(x))}  \frac{\mu(R)}{\mu(\DD_n(x))}\le \e \right\}.
\]
(We use the convention $\min\emptyset = \infty$.) Given $k\ge 1$, $0<\eta\le 1$ and $0\le \e<1$, we say that a measure $\mu$ is \textbf{dyadic mean $(k,\eta,\e)$-porous} if
\begin{equation} \label{eq:def-dyadic-mean-lower-porosity}
\liminf_{n\rightarrow\infty} \frac{1}{n} |\{ i\in [n] : \por_2(\mu,x,i,\e)\le k \}| \ge \eta
\end{equation}
for $\mu$-almost every $x$. Finally, we say that $\mu$ is \textbf{dyadic mean $(k,\eta)$-porous} if it is dyadic mean $(k,\eta)$-porous for all $\e>0$. A similar, but more restricted, notion of dyadic porosity was introduced by Beliaev and Smirnov in \cite[\S 6.2]{BeliaevSmirnov02}.

The following is the main technical result of this paper. As we will see, it implies Theorem \ref{thm:main-result-general}, and also yields a considerably stronger version of \cite[Theorem 4]{BeliaevSmirnov02}.

\begin{theorem} \label{thm:main-result-dyadic}
Fix an ambient dimension $d$ and $k\ge 1$. Given $\e\in [0,2^{-kd}]$, Let $s(\e) = s_{d,k}(\e)$ be the largest real solution $s$ to the equation
\begin{equation} \label{eq:def-s}
(1-\e)\log\left(\frac{(2^d-1)(\sum_{i=1}^k 2^{-si})}{1-\e}\right) + \e\log(1/\e) = s\e\log(2^k).
\end{equation}
Further, let $t(\e) = t_{d,k}(\e) := d - s_{d,k}(\e)$. Then
\begin{equation} \label{eq:dimension-bound}
\dimp\mu \le d - \eta \cdot t_{d,k}(\e),
\end{equation}
for any dyadic mean $(k,\eta,\e)$-porous measure $\mu$ on $[0,1)^d$.
Moreover,
\renewcommand{\theenumi}{\roman{enumi}}
\begin{enumerate}
\item \label{enum:t-is-positive} $t_{d,k}(\e) > 0$ whenever $\e < 2^{-kd}$.
\item \label{enum:t-of-0-lower-bound} If $\e$ is small enough (depending on $d,k$),
\[
t_{d,k}(\e) > \frac{2}{5\log 2}\cdot 2^{-kd}.
\]
\end{enumerate}
\end{theorem}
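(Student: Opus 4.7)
The plan is to apply Corollary~\ref{cor:bound-packing-dim-entropy-Lyapunov} to a $2^{-k}$-regular tree $\RR^*$ of dyadic cubes adapted to the porosity structure of $\mu$. For each dyadic cube $Q$, call $Q$ \textbf{porous} if $\por_2(\mu, Q, \e) \le k$ and \textbf{non-porous} otherwise. At a porous $Q$, pick a witness $R \in \DD_j(Q)$ with $j \le k$ and $\mu(R) \le \e\mu(Q)$; replacing $R$ by a dyadic descendant, we may assume $j = k$. With $Q = Q_0 \supset Q_1 \supset \cdots \supset Q_k = R$ the nested dyadic ancestry, define the \textbf{path partition} $\RR(Q)$ to be $R$ together with, for each $i = 1, \ldots, k$, the $2^d - 1$ dyadic siblings of $Q_i$ inside $Q_{i-1}$. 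At a non-porous $Q$, let $\RR(Q) = \DD_1(Q)$.

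The heart of the proof is the entropy estimate at porous cubes. Writing $p_R = \mu^Q(R) \le \e$ and $q_i$ for the total $\mu^Q$-mass on off-path cubes at depth $i$, concavity of entropy inside each depth level gives
\[
H(Q) - s\lambda(Q) \le p_R \log\!\frac{2^{-sk}}{p_R} + \sum_{i=1}^{k} q_i \log\!\frac{(2^d-1)\,2^{-si}}{q_i}.
\]
A Lagrange multiplier calculation shows that, for $\e < 2^{-kd}$, the maximum over $p_R \in [0, \e]$ and $p_R + \sum q_i = 1$ is attained at $p_R = \e$ with $q_i \propto (2^d-1)2^{-si}$, and equals the left-hand side of \eqref{eq:def-s} minus $s\e\log(2^k)$. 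By definition of $s(\e)$, this vanishes at $s = s(\e)$, so $H(Q) \le s(\e)\lambda(Q)$ at every porous $Q$; trivially $H(Q) \le d\log 2 = d\lambda(Q)$ at non-porous cubes.

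Set $A_n(x) = \sum_{i<n,\,\RR_i(x)\text{ porous}} \lambda(\RR_i(x))$ and let $B_n(x)$ be the analogous sum over non-porous cubes. Applying Corollary~\ref{cor:bound-packing-dim-entropy-Lyapunov} with the entropy bounds yields
\[
\dimp\mu \le d - t(\e) \cdot \mu\text{-}\essinf\liminf_n \frac{A_n(x)}{A_n(x) + B_n(x)}.
\]
By Theorem~\ref{thm:local-dim-entropy-Lyapunov}, applied separately to porous and non-porous increments (both are $L^2$-bounded martingale differences), $A_n + B_n = N_n(x)\log 2 + o(n)$ where $N_n(x) := -\log_2\ell(\RR_n(x))$, while $B_n = \log 2 \cdot |\{i < n : \RR_i(x)\text{ non-porous}\}|$ exactly, since non-porous cubes have $\lambda = \log 2$. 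The crucial observation is that the cubes along $x$'s tree path are strictly nested, so they occupy distinct dyadic scales in $[0, N_n(x))$; therefore each non-porous tree cube corresponds to a distinct non-porous dyadic scale. Mean porosity then forces $B_n/\log 2 \le (1-\eta) N_n(x) + o(N_n(x))$ $\mu$-almost everywhere, yielding $\liminf_n A_n/(A_n+B_n) \ge \eta$ and the dimension bound \eqref{eq:dimension-bound}.

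For the two remaining claims, let $F(s)$ denote the difference of the two sides of \eqref{eq:def-s}. Direct differentiation shows $F'(s) < 0$, so $F = 0$ has a unique root (making ``largest'' redundant), continuous in $\e$ by the implicit function theorem. A short computation identifies $F(d)$ with $-D_{\mathrm{KL}}(\e\,\|\,2^{-kd})$, strictly negative for $\e < 2^{-kd}$, which forces $s(\e) < d$ and proves \eqref{enum:t-is-positive}. For \eqref{enum:t-of-0-lower-bound}, the $\e = 0$ equation reduces to $G(s) := (2^d-1)\sum 2^{-si} = 1$; combining $G(d) = 1 - 2^{-kd}$ with the closed-form estimates $|G'(d)| \le 2\log 2$ and $(2^d-1)\sum i^2\,2^{-di} \le 3$ (both from geometric-type series), a Taylor expansion with remainder at $s = d$ shows $G(d - \delta) < 1$ when $\delta = (2/(5\log 2))\,2^{-kd}$, hence $s(0) < d - \delta$ and $t(0) > \delta$; continuity of $t_{d,k}$ extends this to small $\e > 0$. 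The main subtlety is the combining step: without the clean ``distinct dyadic scales'' observation, one loses a factor of $k$ and only obtains $\dimp\mu \le d - (\eta/k)\,t(\e)$.
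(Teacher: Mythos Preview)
Your argument is correct and follows essentially the same route as the paper: the same tree construction, the same Lagrange-multiplier identification of $s(\e)$ as the maximal $H/\lambda$ ratio at porous cubes (Lemma~\ref{lem:calculation-constant}), the same ``distinct dyadic scales'' observation (Lemma~\ref{lem:application-mean-porosity}), and the same passage between $\lambda$-sums and actual side-lengths via the martingale LLN (Lemma~\ref{lem:length-equal-lyapunov-exponents}). One minor quibble: your bound $(2^d-1)\sum i^2\,2^{-di}\le 3$ in the Taylor argument for \eqref{enum:t-of-0-lower-bound} fails when $d=1$ (the infinite sum equals~$6$), though the first-order term still dominates and the paper itself simply defers this $\e=0$ calculation to \cite{KaenmakiSuomala09}.
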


Although the equation that defines $s_{d,k}$ looks fairly complicated, it is straightforward to compute it numerically. See Figure \ref{fig-t} for an example.

 \begin{figure}
    \centering
    \includegraphics[width=0.8\textwidth]{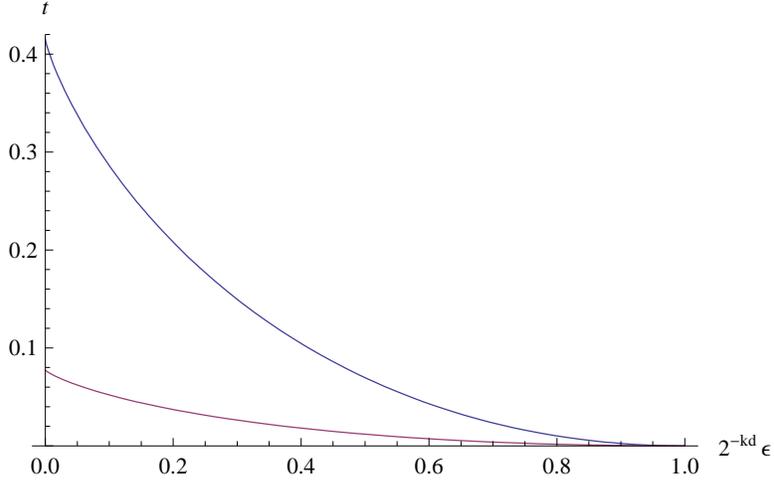}
    \caption{The ``dimension drop'' function $t_{d,k}(2^{-kd}\e)$ for $d=2$ and $k=1$ and $2$. }\label{fig-t}
  \end{figure}

\begin{remark}
We comment on the relationship of Theorem \ref{thm:main-result-dyadic} with \cite[Theorem 4]{BeliaevSmirnov02}. Roughly speaking, Beliaev and Smirnov consider an arbitrary base $b$ (which they denote by $k$), rather than just the dyadic base, but do not have a parameter equivalent to our $k$ (translating their setting into ours, their result corresponds to the case $k=1$). We remark that it is straightforward to give analogues of Theorem \ref{thm:main-result-dyadic} for an arbitrary base, and when $k=1$ these would allow us to recover the estimates in \cite[Theorem 4]{BeliaevSmirnov02}. However, in order to get the sharp estimates for dimension as the porosity $\alpha\to 0$, it is crucial to let $k\to\infty$ (while keeping the base fixed).

Also, Theorem \ref{thm:main-result-dyadic} holds for packing dimension and weak \emph{mean} porosity, while \cite[Theorem 4]{BeliaevSmirnov02} gives estimates on the Hausdorff dimension of weakly porous measures only.
\end{remark}

The next lemma shows that given a mean porous measure, one can find an appropriate dyadic frame such that the measure is also dyadic mean porous, at the cost of losing a constant factor in $\eta$ and $\alpha$.

\begin{lemma} \label{lem:porosity-to-dyadic-porosity}
Let $\mu$ be a mean $(\alpha,\eta,\e)$-porous measure, and fix $r\in (0,1/2)$.  Then for almost every $t\in [0,1/2)^d$, the measure $r \mu+t$ is $(k,(1-2r)^d\eta,\e)$-porous, where
\begin{equation} \label{eq:k-of-alpha}
k = \lceil|\log_2(\alpha r/\sqrt{d})|\rceil.
\end{equation}
\end{lemma}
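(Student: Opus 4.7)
The plan is to transfer Euclidean mean porosity of $\mu$ at scales $2^{-i}$ into dyadic mean porosity of $\nu := (r\cdot + t)_*\mu$ at dyadic scales $2^{-i}$, using the random translation $t$ to avoid dyadic boundary issues. Setting $y = rx + t$, a Euclidean $\alpha$-hole in $B(x, 2^{-i})$ for $\mu$ maps under $u \mapsto ru + t$ to a ``hole ball'' $B(w, \alpha r \cdot 2^{-i}) \subset B(y, r \cdot 2^{-i})$ of $\nu$-mass at most $\e \cdot \nu(B(y, r \cdot 2^{-i}))$.

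The idea is to convert each Euclidean-porous scale $i$ into a dyadic-porous scale at the same index, by imposing the two conditions:
\begin{enumerate}
\item[\textup{(A)}] $B(y, r \cdot 2^{-i}) \subset \DD_i(y)$;
\item[\textup{(B)}] The hole ball contains a dyadic cube of side $2^{-(i+k)}$.
\end{enumerate}
When both hold, the dyadic cube $R$ given by (B) lies in $\DD_k(\DD_i(y))$ and satisfies $\nu(R) \le \e \cdot \nu(\DD_i(y))$, so $\por_2(\nu, y, i, \e) \le k$. Condition (B) is a deterministic geometric inequality: the axis-aligned cube of side $2\alpha r \cdot 2^{-i}/\sqrt d$ inscribed in the hole ball contains a dyadic cube of side $2^{-(i+k)}$ whenever $2^k \ge \sqrt d/(\alpha r)$, which is exactly the definition of $k$. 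Condition (A) is equivalent to $T^i(rx + t) \in [r, 1-r]^d$, where $T \colon u \mapsto 2u \bmod 1$ is the $d$-dimensional doubling map. Since $t \mapsto 2^i(rx + t) \bmod 1$ pushes normalized Lebesgue on $[0, 1/2)^d$ to uniform measure on $[0, 1)^d$ for every $i \ge 1$, each event (A) has $t$-probability exactly $(1-2r)^d$.

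The main obstacle is that the events (A) at different scales are strongly dependent -- $T^j$ is a deterministic function of $T^i$ for $j > i$ -- so a strong law cannot be applied directly. To overcome this, I would invoke the exponential mixing of the doubling map on indicators of axis-aligned boxes, which gives $|\operatorname{Cov}(\mathbf 1_G(T^i u), \mathbf 1_G(T^j u))| \le C_d \cdot 2^{-|j - i|}$ with $G = [r, 1-r]^d$. This yields variance $O(1/n)$ for the Cesàro-type sum
\[
W_n(x, t) = \frac{1}{n} \sum_{i \in J(x) \cap [n]} \bigl( \mathbf 1_G(T^i(rx + t)) - (1-2r)^d \bigr),
\]
where $J(x) = \{i : \por(\mu, x, 2^{-i}, \e) \ge \alpha\}$. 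Chebyshev's inequality along the subsequence $n_k = k^2$ combined with Borel--Cantelli gives $W_{n_k} \to 0$ almost surely, and monotonicity of the partial sums interpolates to $W_n \to 0$. Combining this with $\liminf_n |J(x) \cap [n]|/n \ge \eta$ (mean porosity) produces, for $\mu \otimes \mathrm{Leb}$-a.e.\ $(x, t)$,
\[
\liminf_{n \to \infty} \frac{1}{n} |\{ i \in J(x) \cap [n] : T^i(rx + t) \in [r, 1-r]^d \}| \ge (1-2r)^d \eta.
\]
A final application of Fubini transfers this to the statement ``for a.e.\ $t$, the measure $\nu$ is dyadic mean $(k, (1-2r)^d \eta, \e)$-porous''.
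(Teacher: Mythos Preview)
Your overall architecture is identical to the paper's: introduce a random translation $t$, verify the deterministic geometric inclusion (your (B)) to pass from the Euclidean hole to a dyadic sub-cube of level $k$, verify the random event (your (A)) that $B(y,r2^{-i})\subset\DD_i(y)$ holds at a positive proportion of scales, and finish with Fubini. The value of $k$ and the inscribed-cube argument for (B) also match.

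The one substantive difference is in how the frequency of event~(A) is controlled. The paper asserts that the relative positions $\text{pos}(\tilde{x},\DD_n(\tilde{x}))=T^n(\tilde{x})$ form an i.i.d.\ sequence and then invokes the law of large numbers directly. You correctly observe that these positions are \emph{not} independent---$T^{j}(\tilde{x})$ is a deterministic function of $T^{i}(\tilde{x})$ for $j>i$---and instead use exponential decay of correlations for the doubling map on box indicators, a Chebyshev/Borel--Cantelli argument along $n_k=k^2$, and interpolation. Your route is more careful: the paper's i.i.d.\ claim is not literally true (what \emph{is} true is that the binary digits of $\tilde{x}$ from position $2$ on are i.i.d., which makes the position process a factor of a Bernoulli shift, hence stationary and exponentially mixing for box observables). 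Either the ergodic theorem or your mixing argument repairs the gap; the advantage of your variance approach is that it works cleanly along the deterministic subsequence $J(x)$ of porous scales, which is exactly what is needed to combine (A) with the porosity hypothesis.
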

\begin{proof}
Let $r\in (0,1/2)$ be fixed. Choose $t$ at random  uniformly in $[0,1/2)^d$, and consider the random measure $\widetilde{\mu}=r\mu+t$. Fix a point $x\in\supp\mu$ such that
\begin{equation} \label{eq:porous-point}
\liminf_{n\rightarrow \infty} \frac{1}{n}|\{i\in [n]:\por(\mu,x,2^{-i},\e)\ge \alpha\}| \ge \eta.
\end{equation}
Write $\widetilde{x}=r x+t$. The \textbf{relative position} of a point $y$ inside a dyadic cube $R$, denoted as $\text{pos}(y,R)$, is defined to be $T(y)$, where $T$ is the natural homothety mapping $R$ onto $[0,1)^d$. Note that $\widetilde{x}\bmod 1/2$ is a random variable whose distribution is Lebesgue measure on $[0,1/2)^d$, and therefore the relative positions $\text{pos}(\widetilde{x},\DD_n(\widetilde{x}))$, $n\ge 2$, form a sequence of i.i.d. uniformly distributed random variables.

Fix $r\in (0,1/2)$, and let $U_r$ be the set of points in $[0,1)^d$ at distance at least $r$ from the boundary $\partial [0,1)^d$. Since $U_r$ contains a cube of side $1-2r$, its Lebesgue measure is at least $(1-2r)^d$. Call $i$ a good scale if
\[
\por(\widetilde{\mu},\widetilde{x},r 2^{-i},\e)\ge \alpha \textrm{ and } \text{pos}(\widetilde{x},\DD_i(\widetilde{x}))\in U_r.
\]
By \eqref{eq:porous-point} and the law of large numbers, almost surely
\begin{equation} \label{eq:proportion-of-porous-central-scales}
\liminf_{n\rightarrow\infty} \frac{1}{n} |\{ i\in [n] : i \text{ is a good scale }  \}| \ge (1-2r)^d\eta.
\end{equation}
Note that any ball of radius $\alpha r 2^{-i}$ contains a dyadic cube of side length $2^{-(i+k)}$, where $k$ is as in \eqref{eq:k-of-alpha}. Therefore, if $i$ is a good scale then $\DD_i(\widetilde{x})$ contains a dyadic cube $R\in \DD_k(\DD_i(\widetilde{x}))$ satisfying
\begin{align*}
\widetilde{\mu}(R) & \le \widetilde{\mu}(B(x,\alpha r 2^{-i})) \\
&\le \e\widetilde{\mu}(B(x,r 2^{-i}))  \le \e\widetilde{\mu}(\DD_i(\widetilde{x})).
\end{align*}
In light of \eqref{eq:proportion-of-porous-central-scales}, almost surely
\[
\liminf_{n\rightarrow\infty} \frac{1}{n} |\{ i\in [n] : \por_2(\wt{\mu},\wt{x},i,\e)\le k \}| \ge (1-2r)^d\eta.
\]
The foregoing analysis is for a fixed $x$ satisfying \eqref{eq:porous-point}. Now, since $\mu$ is mean $(\alpha,\eta,\e)$-porous, $\mu$-a.e. point satisfies \eqref{eq:porous-point}, and therefore we can apply Fubini to conclude that for almost every $t$, the measure $r\mu+t$ is dyadic mean $(k, (1-2r)^d\eta,\e)$-porous, as desired.
\end{proof}

\begin{corollary} \label{cor:porosity-to-dyadic-porosity}
Let $\mu$ be a mean $(\alpha,\eta)$-porous measure of bounded support. Then there is a homothetic image $\wt{\mu}$ of $\mu$ which is dyadic mean $(k,2^{-d}\eta)$-porous, where
\begin{equation} \label{eq:k-of-alpha-expl}
k = \left\lceil \log_2(4\sqrt{d}/\alpha)\right\rceil.
\end{equation}
\end{corollary}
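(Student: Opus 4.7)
The plan is to apply Lemma~\ref{lem:porosity-to-dyadic-porosity} with the single choice $r = 1/4$, taking care of the ``for every $\e>0$'' quantifier in the definition of mean $(\alpha,\eta)$-porosity via a countable intersection.

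First I would observe that mean $(\alpha,\eta,\e)$-porosity is invariant under homotheties: the defining expression \eqref{eq:def-porosity} is manifestly scale- and translation-invariant, so a preliminary homothety lets me assume $\supp\mu\subset[0,1)^d$ without changing the parameters $\alpha,\eta$. Now I apply Lemma~\ref{lem:porosity-to-dyadic-porosity} with $r = 1/4$, so $(1-2r)^d = 2^{-d}$, and the lemma's value of $k$ becomes
\[
\lceil |\log_2(\alpha/(4\sqrt d))| \rceil = \lceil \log_2(4\sqrt d/\alpha)\rceil,
\]
matching \eqref{eq:k-of-alpha-expl} (using $\alpha \le 1/2 \le 4\sqrt d$, so the logarithm is non-positive). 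Since $\mu$ is mean $(\alpha,\eta,2^{-n})$-porous for every $n\in\N$, Lemma~\ref{lem:porosity-to-dyadic-porosity} gives, for each $n$, a Lebesgue-conull set $T_n \subset [0,1/2)^d$ such that $r\mu+t$ is dyadic mean $(k,2^{-d}\eta,2^{-n})$-porous for every $t\in T_n$.

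The intersection $T = \bigcap_{n\ge 1} T_n$ is still conull, so I pick any $t\in T$ and define $\wt\mu = r\mu+t$, which is a homothetic image of $\mu$. To finish I need $\wt\mu$ to be dyadic mean $(k,2^{-d}\eta,\e)$-porous for \emph{every} $\e>0$, not only for $\e = 2^{-n}$. For this I use the elementary monotonicity in $\e$: directly from the definition of $\por_2$, if $\e' > \e$ then $\por_2(\nu,x,i,\e') \le \por_2(\nu,x,i,\e)$, so the indicator set in \eqref{eq:def-dyadic-mean-lower-porosity} grows with $\e$ and the lower density bound is preserved. Given any $\e>0$, choose $n$ with $2^{-n}\le \e$; since $\wt\mu$ is dyadic mean $(k,2^{-d}\eta,2^{-n})$-porous, it is also dyadic mean $(k,2^{-d}\eta,\e)$-porous. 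Therefore $\wt\mu$ is dyadic mean $(k,2^{-d}\eta)$-porous, as required.

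There is essentially no genuine obstacle: Lemma~\ref{lem:porosity-to-dyadic-porosity} does all the hard work, and the corollary is just a packaging step. The only two points requiring care are (a)~combining the countably many $\e$-dependent null sets from Lemma~\ref{lem:porosity-to-dyadic-porosity} into one null set, which is why we need the monotonicity of $\por_2$ in $\e$; and (b)~verifying that with $r=1/4$ the quantity $\lceil |\log_2(\alpha r/\sqrt d)|\rceil$ simplifies to \eqref{eq:k-of-alpha-expl}.
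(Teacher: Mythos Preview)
Your proof is correct and follows essentially the same approach as the paper: the paper's proof simply says to assume without loss of generality that $\mu$ is supported on $[0,1/2)^d$, then apply Lemma~\ref{lem:porosity-to-dyadic-porosity} with $r=1/4$ and a sequence $\e_n\to 0$. You have spelled out the details the paper leaves implicit---the countable intersection of conull sets and the monotonicity of $\por_2$ in $\e$---which is exactly what is needed.
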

\begin{proof}
Without loss of generality $\mu$ is supported on $[0,1/2)^d$. Then apply Lemma \ref{lem:porosity-to-dyadic-porosity} with $r=1/4$ and a sequence $\e_n\to 0$.
\end{proof}

\begin{proof}[Proof of Theorem \ref{thm:main-result-general}]
Set
\[
t_{d,\alpha}(\e) = 2^{-d} \cdot t_{d,k}(\e),
\]
where $k=k(\alpha)$ is defined in \eqref{eq:k-of-alpha-expl}. The theorem follows immediately from Theorem \ref{thm:main-result-dyadic} and Lemma \ref{lem:porosity-to-dyadic-porosity} applied with $r=1/4$. To verify that the value of the constant $c_d$ is correct, note that $2^{-kd}\ge  2^{-3d}\cdot d^{-d/2}\cdot\alpha^d$. For the bound on $\e$ given in the first part, note that $t_{d,\alpha}(\e)>0$ whenever $\e <2^{-dk}$, and use \eqref{eq:k-of-alpha-expl}.
\end{proof}

\section{Proof of Theorem \ref{thm:main-result-dyadic}} \label{sec:proof-of-theorem}

\subsection{Construction of the tree}

From now on we assume that $\mu$ is a dyadic mean $(k,\eta,\e)$-porous measure, with $\e \in [0, 2^{-kd}]$.

We will split dyadic cubes according to whether they are porous or not: we say that $Q\in\DD^*$ is \textbf{porous} if there is $R\in\DD_k(Q)$ such that \begin{equation} \label{eq:R-has-small-measure}
\mu(R) \le \e \mu(Q).
\end{equation}
The class of all porous cubes will be denoted $\mathcal{P}$. We also let $\mathcal{N}=\mathcal{D}^*\backslash\mathcal{P}$ denote the class of all dyadic cubes which are not porous.

Notice that, since $\mu$ is dyadic mean $(k,\eta,\e)$-porous,
\begin{equation} \label{eq:application-porosity}
\liminf_{n\rightarrow\infty} \frac{1}{n} |\{i\in [n]: \DD_i(x)\in\mathcal{P}  \}| \ge \eta\quad \text{for } \mu\text{-a.e. } x.
\end{equation}

 \begin{figure}
    \centering
    \includegraphics[width=0.7\textwidth]{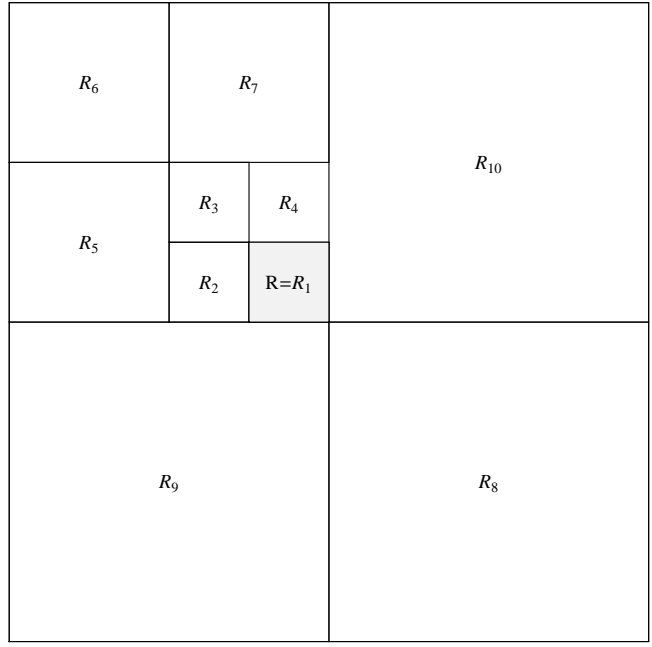}
    \caption{The construction of $\RR(Q)=\{R_1,\ldots, R_{10}\}$ when $Q\in\mathcal{P}$. The shaded square $R$ satisfies $\mu(R)\le \e \mu(Q)$. }\label{fig-tree}
  \end{figure}

We will now introduce the relevant partition operator $\RR$.
\begin{itemize}
\item If $Q\in\mathcal{N}$, then we let $\RR(Q):=\DD_1(Q)$ consist of the dyadic sub-cubes of $Q$ of first level.
\item If $Q\in\mathcal{P}$, then we define $\RR(Q)$ as follows. Let $R\in \DD_k(Q)$ be such that \eqref{eq:R-has-small-measure} holds. Then $\RR(Q)$ consists of $R$, and for each $1\le j\le k$, the $2^d-1$ dyadic cubes in $\DD_j(Q)$ which do not contain $R$. See Figure \ref{fig-tree} for an example when $d=2$ and $k=3$.
\end{itemize}

The tree $\RR^*$ is $2^{-k}$-regular, so our earlier results, and in particular Corollary \ref{cor:bound-packing-dim-entropy-Lyapunov}, apply.

\subsection{An estimate for $H(Q)/\lambda(Q)$}

Our goal is to estimate the packing dimension of $\mu$ by means of Corollary \ref{cor:bound-packing-dim-entropy-Lyapunov}. In order to do this we will need the following lemma, which describes the maximum possible value of $H(Q)/\lambda(Q)$ for $Q\in\mathcal{P}$.

\begin{lemma} \label{lem:calculation-constant}
The number $s(\e)=s_{d,k}(\e)$, defined in Theorem \ref{thm:main-result-general}, is the supremum of the possible values of $H(Q)/\lambda(Q)$ for $Q\in\mathcal{P}$ (over all possible measures $\mu$).
\end{lemma}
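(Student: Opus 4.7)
The plan is to recast the quantity $H(Q)/\lambda(Q)$ for a porous cube $Q$ as a constrained optimization over probability weights on the children $\RR(Q)$, and to identify the supremum with the root of equation \eqref{eq:def-s}. Let $p_0 = \mu^Q(R)$ denote the mass of the distinguished child $R\in\DD_k(Q)$ witnessing porosity, so $\ell_Q(R) = 2^k$ and $p_0 \le \e$, and for $j=1,\ldots,k$ let $q_j$ be the total mass of the $2^d-1$ cubes of $\RR(Q)\cap\DD_j(Q)$, each with $\ell_Q = 2^j$. Because $H$ is strictly concave while $\lambda$ depends only on $(p_0,q_1,\ldots,q_k)$, and both are symmetric under permutations of the $2^d-1$ children at each level, the ratio $H/\lambda$ is maximised when those masses are equally split; under this reduction
\[
H(Q) = -p_0\log p_0 - \sum_{j=1}^k q_j\log\frac{q_j}{2^d-1}, \qquad \lambda(Q) = \log 2\cdot\Big(kp_0 + \sum_{j=1}^k j q_j\Big),
\]
with $p_0 + \sum_j q_j = 1$, $0\le p_0\le\e$, $q_j\ge 0$.

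I would then linearise the ratio via the standard device: $\sup H/\lambda = s^{\ast}$ iff $\sup_{\text{feasible}}(H - s^{\ast}\lambda)=0$, so it suffices to compute $\Phi(s) := \sup_{\text{feasible}}(H-s\lambda)$ and solve $\Phi(s)=0$. A brief Lagrange-multiplier analysis of the \emph{unconstrained} problem gives critical points of the form $p_0 = C(s)\cdot 2^{-sk}$ and $q_j = (2^d-1)C(s)\cdot 2^{-sj}$; at $s=d$ these collapse to the Lebesgue-proportional weights, in particular $p_0 = 2^{-kd}$. Consequently, whenever $\e\le 2^{-kd}$ and $s\le d$, the unconstrained optimum sits at $p_0\ge 2^{-kd}\ge \e$, so the porosity constraint $p_0\le\e$ is active and the constrained optimum has $p_0=\e$. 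Solving the remaining Lagrange problem in $q_1,\ldots,q_k$ under $\sum_j q_j = 1-\e$ yields the Gibbs-type optimiser
\[
q_j \;=\; (1-\e)\cdot\frac{2^{-sj}}{\sum_{i=1}^k 2^{-si}}.
\]

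Substituting these values back into $H-s\lambda$, the cross terms in $\sum_j j q_j$ cancel and, after regrouping, one obtains
\[
\Phi(s) = (1-\e)\log\frac{(2^d-1)\sum_{j=1}^k 2^{-sj}}{1-\e} + \e\log(1/\e) - s\e\log(2^k),
\]
which is exactly the difference of the two sides of \eqref{eq:def-s}. A direct computation shows $\Phi'(s) < 0$ for all $s$, so $\Phi$ has a unique zero, which is therefore the (largest real) solution $s_{d,k}(\e)$. This gives $H(Q)/\lambda(Q)\le s_{d,k}(\e)$ for every porous $Q$, with equality attained by the Gibbs configuration above. The only delicate step I would spell out carefully is the activeness of the constraint $p_0\le\e$ throughout the range $\e\le 2^{-kd}$; the rest is a routine Lagrangian calculation.
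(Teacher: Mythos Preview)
Your approach is essentially the same as the paper's: both reduce by symmetry to equal masses within each level, both obtain Gibbs-type optimisers $q_j\propto 2^{-sj}$ via Lagrange multipliers, and both argue that the constraint $p_0\le\e$ is active because the unconstrained optimum has $p_0=2^{-kd}$. Your packaging via the linearisation $\Phi(s)=\sup(H-s\lambda)$ and the observation $\Phi'(s)<0$ is a slightly cleaner way to identify the root as the \emph{largest} solution of \eqref{eq:def-s} than the paper's argument (which instead notes that any root $s$ equals $g_\e$ at some feasible point, hence $s\le M(\e)$).

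One point to spell out in your write-up: your ``Consequently'' infers $p_0(s)\ge 2^{-kd}$ for all $s\le d$ from the single value $p_0(d)=2^{-kd}$, which is not immediate since $C(s)$ also varies with $s$. The missing observation is that
\[
p_0(s)=\frac{2^{-sk}}{2^{-sk}+(2^d-1)\sum_{j=1}^k 2^{-sj}}=\frac{1}{1+(2^d-1)\sum_{i=0}^{k-1}2^{si}}
\]
is manifestly decreasing in $s$, so $p_0(s)\ge p_0(d)=2^{-kd}\ge\e$ for $s\le d$; since $H-s\lambda$ is concave, the constrained optimum on $\{p_0\le\e\}$ then indeed lies on $p_0=\e$. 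With this filled in, your argument is complete.
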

\begin{proof}
Let $N = (2^d-1)k + 1$, and let $( \alpha_i )_{i=1}^N$ be the vector
\[
\left(\underbrace{2^{-1},\ldots, 2^{-1}}_{2^d-1 \textrm{ times}},\ldots, \underbrace{2^{-(k-1)},\ldots,2^{-(k-1)}}_{2^d-1 \textrm{ times}},\underbrace{2^{-k},\ldots, 2^{-k}}_{2^d \textrm{ times}}\right).
\]
The supremum in question will be the supremum (indeed, the maximum) of the expression
\begin{equation} \label{eq:raw-expression-to-maximize}
\frac{\sum_{i=1}^{N} p_i\log(1/p_i)}{\sum_{i=1}^{N} p_i\log(1/\alpha_i)},
\end{equation}
subject to the constraints $\sum_{i=1}^{N} p_i = 1$, $p_i\ge 0$ for all $i$, and $p_N\le \e$. Write $L=2^d-1$. The first observation is that if (for a fixed $i$) we replace each of $p_{iL+1},\ldots, p_{iL+L}$ by the average $\frac1L \sum_{j=1}^{L} p_{iL+j}$, then the numerator in \eqref{eq:raw-expression-to-maximize} increases while the denominator stays constant and the constraints continue to hold. Hence, it is enough to show that $s(\e)$ is the maximum of
\begin{equation} \label{eq:reduced-expression-to-maximize}
\frac{ L\sum_{i=1}^k  \psi(q_i) +\psi(p) }{ L \sum_{i=1}^{k}  q_i \log(2^i) + p \log(2^k)},
\end{equation}
subject to $L \sum_{i=1}^k q_i=1-p$, $q_i\ge 0$ for all $i$ and $0\le p\le \e$. Here $\psi(t) = t\log(1/t)$ is the entropy function.

Fix $p\in [0,2^{-k}]$, and consider \eqref{eq:reduced-expression-to-maximize} as a function $g_p(q_1,\ldots, q_n)$ defined on the simplex
\[
\Delta_p = \left\{ (q_i) : L \sum_{i=1}^k q_i = 1-p \text{ and } q_i\ge 0 \text{ for all } i\right\}.
\]

\textbf{Claim}. Let $M$ be the global maximum of $g_p$ on $\Delta_p$. Then $M$ is attained at the point $q_i = A 2^{-M i}$, where $A$ is defined by the requirement that $(q_i)\in\Delta_p$.

To verify the claim, let
\[
f(q) = L\sum_{i=1}^k  \psi(q_i) +\psi(p) - M\left(L \sum_{i=1}^{k}  q_i \log(2^i) + p \log(2^k)\right),
\]
so that the global maximum of $f$ on $\Delta_p$ is $0$. Since $\frac{\partial f}{\partial q_i}|_{q_i=0^+}=+\infty$ (and partial derivatives are finite everywhere else), $f$ must attain its global maximum on $\text{interior}(\Delta_p)$. At the same time, a straightforward calculation using Lagrange multipliers shows that any local maximum of $f$ in $\text{interior}(\Delta_p)$ is attained at a point of the form $q_i =  A 2^{-M i}$. This yields the claim.

Now let
\[
\widetilde{\Delta}_\e = \left\{ (q_1,\ldots,q_k,p) : p\in [0,\e] \text{ and } (q_i)\in \Delta_p  \right\},
\]
and consider the function $G(q_1,\ldots,q_k,p):=g_p(q)$. A calculation similar to the proof of the claim shows that the only local maximum of $G$ on $\text{interior}(\widetilde{\Delta}_1)$ (i.e. without restrictions on $p$), is attained at a point with $p=2^{-k}$. It follows that if $\e<2^{-k}$, then the global maximum of $G$ on $\widetilde{\Delta}_\e$ has to be attained on the boundary. But it cannot be attained at a point with $q_i=0$ or $p=0$ (for the same reason as in the claim), so it has to be attained at a point with $p=\e$. Together with the claim, this shows that the maximum $M=M(\e)$ of $G$ on $\widetilde{\Delta}_\e$ is attained at the point $(A 2^{-M},\ldots, A 2^{-kM},\e)$.

Recalling the definitions and doing a little algebra, we find that $M(\e)$ satisfies the identity \eqref{eq:def-s}. Conversely, any other number $s$ verifying \eqref{eq:def-s}, satisfies $s = g_\e(A' 2^{-s},\ldots,A' 2^{-ks})$ (where $A'$ is such that the point is in $\Delta_\e$). As $M(\e)$ is the maximum of $g_\e$, we have $s\le M(\e)$. Thus $M(\e)$ is indeed the largest root of \eqref{eq:def-s}, which is what we wanted to prove.
\end{proof}

\subsection{Conclusion of the proof}
We start with some notation. Given $x\in [0,1)^d$ and $n\in\mathbb{N}$, let
\[
\mathcal{N}_n(x) = |\{i\in [n]: \RR_i(x)\in\mathcal{N} \}|.
\]
If $Q(\RR_n(x))\in \DD_M$, we write $M=M_n(x)$. In other words,
\[
M_n(x) = \log_2(\ell(\RR_n(x))).
\]
Set
\[
\eta_n(x) = 1-\frac{\mathcal{N}_n(x)}{M_n(x)}.
\]
We will make use of the following two lemmas in the proof of Theorem \ref{thm:main-result-dyadic}.

\begin{lemma} \label{lem:application-mean-porosity}
\[
\liminf_{n\rightarrow\infty} \eta_n(x) \ge   \eta \quad\text{ for } \mu\text{-a.e. } x.
\]
\end{lemma}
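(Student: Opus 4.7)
\textbf{Proof plan for Lemma \ref{lem:application-mean-porosity}.} The plan is to relate the tree-counting quantity $\mathcal{N}_n(x)$ to the dyadic-scale counting quantity that appears in the hypothesis \eqref{eq:application-porosity}. The key geometric observation is that whenever $\RR_i(x) \in \mathcal{N}$, the tree-cube $\RR_i(x)$ is itself a dyadic cube of depth $M_i(x)$, so it coincides with $\DD_{M_i(x)}(x)$; hence $\DD_{M_i(x)}(x) \in \mathcal{N}$ as a dyadic cube. Since the sequence $i \mapsto M_i(x)$ is strictly increasing (each increment lies in $\{1, \ldots, k\}$) and all $M_i(x)$ with $i \in [n]$ lie in $[M_n(x)]$, the map $i \mapsto M_i(x)$ injects the set of $i \in [n]$ with $\RR_i(x) \in \mathcal{N}$ into the set $\{j \in [M_n(x)] : \DD_j(x) \in \mathcal{N}\}$. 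Consequently
\[
\mathcal{N}_n(x) \;\le\; \bigl|\{ j \in [M_n(x)] : \DD_j(x) \in \mathcal{N}\}\bigr|.
\]

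Next I would apply the dyadic mean porosity assumption. Since $M_n(x) \ge n \to \infty$, the hypothesis \eqref{eq:application-porosity} applied along the sequence $m = M_n(x)$ yields, for $\mu$-a.e.\ $x$ and any $\delta > 0$, the inequality
\[
\frac{1}{M_n(x)} \bigl|\{ j \in [M_n(x)] : \DD_j(x) \in \mathcal{P}\}\bigr| \;\ge\; \eta - \delta
\]
for all sufficiently large $n$. Taking complements and combining with the injection above gives
\[
\frac{\mathcal{N}_n(x)}{M_n(x)} \;\le\; 1 - \eta + \delta
\]
eventually, which is precisely $\eta_n(x) \ge \eta - \delta$ eventually, and letting $\delta \to 0$ finishes the proof.

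I do not expect any serious obstacle: the argument is essentially a bookkeeping exercise. The only point that requires care is that the application of \eqref{eq:application-porosity} is along the (random) subsequence $m = M_n(x)$ rather than along all of $\mathbb{N}$, but this is legitimate because \eqref{eq:application-porosity} holds as a $\liminf$ over all $m$ (for a fixed $\mu$-typical $x$) and $M_n(x) \to \infty$ deterministically from $M_n(x) \ge n$.
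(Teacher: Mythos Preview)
Your proposal is correct and follows essentially the same route as the paper: the paper's proof is precisely the inclusion $\{ M_i(x) : \RR_i(x)\in\mathcal{N},\, i\in [n] \} \subset \{ j\in[M_n(x)]: \DD_j(x)\in\mathcal{N} \}$ followed by an application of \eqref{eq:application-porosity}. Your write-up is slightly more detailed (you make explicit why the subsequence $M_n(x)\to\infty$ causes no trouble, and why the map $i\mapsto M_i(x)$ is injective), but the argument is the same.
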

\begin{proof}
Clearly,
\[
\{ M_i(x) : \RR_i(x)\in\mathcal{N}, i\in [n]  \} \subset \{ j\in[M_n(x)]: \DD_j(x)\in\mathcal{N}  \},
\]
whence, by \eqref{eq:application-porosity},
\[
\limsup_{n\rightarrow\infty} \frac{1}{M_n(x)} |\{ M_i(x) : \RR_i(x)\in\mathcal{N}, i\in [n]  \}| \le 1-\eta,
\]
for $\mu$-almost every $x$. This implies the lemma.
\end{proof}

For the next lemma we need an additional bit of notation.  Write
\[
\overline{M}_n(x) = \log 2 (M_{n+1}(x)-M_n(x)) =  \log\left(\frac{\ell(\RR_n(x))}{\ell(\RR_{n+1}(x))}\right) .
\]

\begin{lemma} \label{lem:length-equal-lyapunov-exponents}
For $\mu$-almost every $x$,
\[
\lim_{n\rightarrow\infty}\frac{1}{n}\sum_{i\in [n]: \RR_i(x)\in\mathcal{P} } (\lambda(\RR_i(x))-\overline{M}_i(x))= 0.
\]
\end{lemma}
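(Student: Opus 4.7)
The plan is to mimic the strategy of Theorem \ref{thm:local-dim-entropy-Lyapunov}, namely to recognize the sum as a partial sum of bounded martingale differences and invoke the law of large numbers for such sequences.

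First I would set up the relevant filtration and martingale. Let $\mathcal{F}_n = \sigma(\RR_n)$, so that $\{\mathcal{F}_n\}$ is an increasing filtration generating the Borel $\sigma$-algebra of $[0,1)^d$. Observe that the function
\[
L_{n+1}(x) = \log \ell_{\RR_n(x)}(\RR_{n+1}(x)) = \overline{M}_n(x)
\]
is $\mathcal{F}_{n+1}$-measurable, and by the very definition of $\lambda$ one has $\EE(\overline{M}_n \mid \mathcal{F}_n) = \lambda(\RR_n(x))$. Consequently, if we define
\[
Z_n(x) = \mathbf{1}_{\{\RR_n(x)\in\mathcal{P}\}}\bigl(\overline{M}_n(x) - \lambda(\RR_n(x))\bigr),
\]
then, since $\mathbf{1}_{\{\RR_n(x)\in\mathcal{P}\}}$ is $\mathcal{F}_n$-measurable, we obtain
\[
\EE(Z_n \mid \mathcal{F}_n) = \mathbf{1}_{\{\RR_n(x)\in\mathcal{P}\}} \bigl(\EE(\overline{M}_n \mid \mathcal{F}_n) - \lambda(\RR_n(x))\bigr) = 0.
\]
Thus $\{Z_n\}$ is a martingale difference sequence with respect to $\{\mathcal{F}_{n+1}\}$.

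Next I would verify uniform boundedness. Because $\RR^*$ is $2^{-k}$-regular, every child $R \in \RR(Q)$ satisfies $2^{-k} \le \ell(R)/\ell(Q) \le 1/2$, so $\overline{M}_n(x) \in [\log 2, k\log 2]$, and the same bounds apply to $\lambda(\RR_n(x))$ as a weighted average. Therefore $|Z_n| \le k \log 2$ uniformly in $n$ and $x$, in particular $\{Z_n\}$ is uniformly bounded in $L^2(\mu)$.

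Finally, applying the law of large numbers for martingale differences (exactly as invoked in the proof of Theorem \ref{thm:local-dim-entropy-Lyapunov}, e.g.\ \cite[Theorem 3 in Section 9 of Chapter 7]{Feller71}) yields
\[
\lim_{n\to\infty} \frac{1}{n} \sum_{i=0}^{n-1} Z_i(x) = 0 \quad \mu\text{-a.e.},
\]
which is exactly the statement of the lemma. There is no real obstacle here: the only mild subtlety is to notice that the indicator $\mathbf{1}_{\{\RR_i(x)\in\mathcal{P}\}}$ is $\mathcal{F}_i$-measurable (so that multiplying the martingale differences $\overline{M}_i - \lambda(\RR_i(\cdot))$ by it preserves the martingale-difference property), but this is immediate since porosity of $\RR_i(x)$ depends only on $\mu$ and on which cube of $\RR_i$ contains $x$.
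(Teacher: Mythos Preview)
Your proof is correct. However, the paper takes a shorter route: rather than building a new martingale-difference sequence with the indicator $\mathbf{1}_{\{\RR_i(x)\in\mathcal{P}\}}$ inside, it simply observes that for every $i$ with $\RR_i(x)\in\mathcal{N}$ the partition $\RR(\RR_i(x))=\DD_1(\RR_i(x))$ consists of cubes of exactly half the side length, so $\lambda(\RR_i(x))=\overline{M}_i(x)=\log 2$ and these terms contribute zero. Hence the sum over $\{i:\RR_i(x)\in\mathcal{P}\}$ equals the full sum $\sum_{i\in[n]}(\lambda(\RR_i(x))-\overline{M}_i(x))$, which tends to $0$ after dividing by $n$ directly by \eqref{LLN-length} in Theorem~\ref{thm:local-dim-entropy-Lyapunov}. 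Your argument is a bit more robust (it would still work if the $\mathcal{N}$-terms did not happen to cancel), but here the extra generality is not needed and the paper's one-line reduction is more economical.
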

\begin{proof}
Since $\lambda(\RR_i(x)) = \overline{M}_i(x)=1$ for all $i$ such that $\RR_i(x)\in\mathcal{N}$, this follows from Theorem \ref{thm:local-dim-entropy-Lyapunov}.
\end{proof}

We are now ready to complete the proof of Theorem \ref{thm:main-result-dyadic}.

\begin{proof}[Proof of Theorem \ref{thm:main-result-dyadic}]
Fix a large $n$ and $x\in [0,1)^d$.
We want to estimate the quotient
\[
D_n(x) := \frac{\sum_{i=0}^{n-1} H(\RR_i(x))}{\sum_{i=0}^{n-1} \overline{M}_i(x)}.
\]
Indeed, by Corollary \ref{cor:bound-packing-dim-entropy-Lyapunov},
\begin{equation} \label{eq:application-bound-for-dimension}
\dimp(\mu) \le \mu\text{-}\esssup\limsup_{n\rightarrow\infty} D_n(x).
\end{equation}

If $\RR_i(x)\in \mathcal{N}$, then the situation is simple, as then $\overline{M}_{i}(x)=1$ and $H(Q)\le \log 2\cdot d$. Write
\begin{align*}
H_{\mathcal{P}}(n,x) &= \sum_{i\in [n]: \RR_i(x)\in\mathcal{P}} H(\RR_i(x)),\\
\overline{M}_{\mathcal{P}}(n,x) &= \sum_{i\in [n]: \RR_i(x)\in\mathcal{P}} \overline{M}_i(x).
\end{align*}
Then we have
\begin{align}
D_n(x) &\le \frac{ \log 2\cdot d\cdot \mathcal{N}_n(x) +  H_{\mathcal{P}}(n,x) }{ \log 2\cdot \mathcal{N}_n(x) + \overline{M}_{\mathcal{P}}(n,x)  }\nonumber\\
&=  d - \frac{d \cdot\overline{M}_{\mathcal{P}}(n,x)-H_{\mathcal{P}}(n,x)}{\log 2\cdot \mathcal{N}_n(x)+\overline{M}_{\mathcal{P}}(n,x)}.  \label{eq:partial-bound-1}
\end{align}
Note that, by the definition of $\eta_n(x)$,
\begin{align*}
\overline{M}_{\mathcal{P}}(n,x) &= \log 2\cdot(M_n(x) - \mathcal{N}_n(x))\\
 &= \log 2\cdot \eta_n(x) M_n(x)\\
 &= \frac{\eta_n(x)}{1-\eta_n(x)} (\log 2\cdot \mathcal{N}_n(x)).
\end{align*}
Thus, in light of \eqref{eq:partial-bound-1},
\[
D_n(x) \le d - \eta_n(x) \left(d-\frac{H_{\mathcal{P}}(n,x)}{\overline{M}_{\mathcal{P}}(n,x)}\right).
\]
Hence, by Lemmas \ref{lem:application-mean-porosity} and \ref{lem:length-equal-lyapunov-exponents},
\[
\limsup_{n\to\infty} D_n(x) \le d - \eta\left(d- \limsup_{n\rightarrow\infty} \frac{\sum_{i\in [n]: \RR_i(x)\in\mathcal{P}}H(\RR_i(x))} {\sum_{i\in [n]: \RR_i(x)\in\mathcal{P}}\lambda(\RR_i(x))}  \right),
\]
for $\mu$-almost every $x$. But, using Lemma \ref{lem:calculation-constant},
\begin{align*}
\frac{\sum_{i\in [n]: \RR_i(x)\in\mathcal{P}}H(\RR_i(x))} {\sum_{i\in [n]: \RR_i(x)\in\mathcal{P}}\lambda(\RR_i(x))}&\le \max_{i\in [n]: \RR_i(x)\in\mathcal{P}} \frac{H(\RR_i(x))}{\lambda(\RR_i(x))} \\
&\le s_{d,k}(\e),
\end{align*}
for all $x\in\supp(\mu)$. Recalling that $t_{d,k}(\e) = d- s_{d,k}(\e)$, we deduce that
\[
\limsup_{n\rightarrow\infty} D_n(x) \le d- \eta\cdot t_{d,k}(\e).
\]
for $\mu$-almost every $x$. In view of \eqref{eq:application-bound-for-dimension}, this concludes the proof of the dimension bound \eqref{eq:dimension-bound}.

To end the proof of the theorem, we observe that:
\begin{itemize}
\item \eqref{enum:t-is-positive} follows from the fact that $s_{d,k}$ is strictly increasing on $[0,2^{-dk}]$ (which can be seen, for example, from the proof of Lemma \ref{lem:calculation-constant}), and $s_{d,k}(2^{-dk}) = d$.
\item To see \eqref{enum:t-of-0-lower-bound}, by continuity of $s_{d,k}$ is enough to verify it at $\e=0$. See the proof of \cite[Theorem 4.1]{KaenmakiSuomala09} for the calculation in this case.
\end{itemize}
\end{proof}

\section{Examples, remarks and open questions}  \label{sec:conclusion}

\subsection{Other notions of mean porosity} \label{subsec:other-notions}

Our definition of mean porosity is somewhat artificial since it requires the existence of holes precisely at the dyadic scales (the class of mean $(\alpha,\eta$)-porous measures is not invariant under homotheties). In \cite[\S 2.2]{BeliaevSmirnov02}, a measure $\mu$ is defined to be mean $(\alpha,\eta)$-porous if for almost every $x$ there is $r(x)>0$, such that
\[
\liminf_{n\to\infty} \frac1n |\{i\in [n] : \por(\mu,x, r(x) 2^{-i},\e) \ge \alpha \}| \ge \eta \quad \text{for all } \e>0.
\]
It is clear that Theorem \ref{thm:main-result} continues to hold under this definition, at the cost of changing the constant $c_d$, since (assuming, as we may, $r(x)\in [0,1]$),
\[
\por(\mu,x, 2^{-i},\e)  \ge \frac{1}{2} \por(\mu,x, r(x) 2^{-i},\e).
\]
In fact, we claim that Theorem \ref{thm:main-result} holds for this larger class with the same constant. This is because, by compactness, for any $\widetilde{\alpha}<\alpha$ we can find a finite set $\{ r_i\}$ such that for $\mu$-almost every $x$, there is $i(x)$ (which can be chosen measurably) such that
\[
\liminf_{n\to\infty} \frac1n |\{i\in [n] : \por(\mu,x, r_{i(x)} 2^{-i},\e) \ge \widetilde{\alpha} \}| \ge \eta \quad \text{for all } \e>0.
\]
Applying Theorem \ref{thm:main-result} to each of the measures $r_j^{-1} \mu|_{\{x: i(x)=j\}}$ (which are now mean $(\widetilde{\alpha},\eta)$-porous according to our original definition), we find that $\dim_P(\mu) \le d - c_d \cdot\eta\cdot \widetilde{\alpha}^d$. As $\widetilde{\alpha}<\alpha$ was arbitrary, this verifies the claim.

Even though the definition of Beliaev and Smirnov makes the class of $(\alpha,\eta)$-porous measures invariant under homotheties, it is still tied to the base $2$ (if one replaces $2$ by $3$ in the definition, one gets a different class). We propose the following base-independent notion of mean porosity: let us say that $\mu$ is mean $(\alpha,\eta)$-porous if
\[
\liminf_{\rho\to 0} \frac{1}{\log(1/\rho)} \int_{\rho}^1 \mathbf{1}(\por(\mu,x,r,\e)>\alpha) \frac{dr}{r} \ge \eta,
\]
for $\mu$-a.e. $x$ and all $\e>0$. (Here $\mathbf{1}(A(r))$ is equal to $1$ if $A(r)$ holds, and to $0$ otherwise.) Again Theorem \ref{thm:main-result} continues to hold with this definition, at the price of changing the constant $c_d$, and even with the same constant, but this requires a finitary version of Theorem \ref{thm:local-dim-entropy-Lyapunov}.

\subsection{An example}

Deterministic measures arising from dynamics are often either absolutely continuous or weakly porous. Roughly speaking, this is because, if they are singular, then the scaling structure of the measure propagates macroscopic irregularities to all scales and points. However, random measures are often weakly mean porous (but not weakly porous). We discuss a simple model to illustrate this. Fix an ambient dimension $d$. Let $\mathbf{P}$ be a probability distribution on the simplex $\Delta$ of all probability measures on the set $\mathcal{D}_1$ of dyadic cubes of first level. We construct a random measure $\mu$ as follows: we first distribute a unit mass among all cubes in $\mathcal{D}_1$ by sampling $\mathbf{P}$. For each cube in $\mathcal{D}_1$ which was assigned positive mass, we further divide the mass along the cubes in $\mathcal{D}_1(Q)$ according to a new independent sampling from $\mathbf{P}$. We continue this process inductively. It is easy to see that if all cubes on $\mathcal{D}_1$ are given positive mass almost surely, then the support of $\mu$ is almost surely the unit cube. However, the measure $\mu$ is in general weakly mean porous:

\begin{prop}
Let $\mathbf{uniform}\in\Delta$ denote the probability distribution on $\mathcal{D}_1$ which assigns the same mass $2^{-d}$ to each cube. Suppose $\mathbf{P}$ is not concentrated on $\mathbf{uniform}$. Then there are $\eta\in (0,1)$ and $\e\in (0,2^{-d})$ (depending on $\mathbf{P}$) such that $\mu$ is dyadic mean $(1,\eta,\e)$-porous almost surely.

Moreover, if $\mathbf{uniform}$ is in the support of $\mathbf{P}$, then almost surely $\mu$ is not (dyadic) weakly porous. More precisely, in this case for every $k\in\mathbb{N}$ and $\delta\in (0,2^{-d})$, there is $\sigma=\sigma(k,\delta)\in (0,1)$ such that $\mu$ is \emph{not} dyadic mean $(k,\sigma,\delta)$-porous almost surely.
\end{prop}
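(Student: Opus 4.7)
The plan is to pass to the \emph{annealed} probability space, in which one first samples the cascade $(p_Q)_{Q\in\DD^*}$ (with each $p_Q$ an independent $\PP$-sample) and then draws $x$ from the resulting random measure $\mu$. Writing $Q_i := \DD_i(x)$ for the dyadic ancestor of $x$ at level $i$, the crucial structural observation (which I would verify by a direct computation of the joint density and summing out the child choices) is that, under the annealed law, the spine distributions $(p_{Q_i})_{i\ge 0}$ form an i.i.d.\ sequence with marginal $\PP$, while distributions at off-spine cubes remain i.i.d.\ $\PP$ and jointly independent of the spine. Once this is established, both parts of the proposition reduce to applications of the strong law of large numbers on the annealed space, and Fubini will transfer the resulting annealed almost sure statements to quenched statements about a.e.\ cascade.

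For the first assertion, since $\PP$ is not concentrated on $\mathbf{uniform}$, there exists $p_0\in\supp(\PP)$ with $\min_{R\in\DD_1}p_0(R)<2^{-d}$. By continuity I can select $\e\in(0,2^{-d})$ and an open neighborhood $U$ of $p_0$ such that every $p\in U$ has some coordinate at most $\e$, with $\pi:=\PP(U)>0$. Whenever $p_{Q_i}\in U$, scale $i$ is dyadically porous at depth $1$ with threshold $\e$. Applying the SLLN to the i.i.d.\ indicators $\mathbf{1}_U(p_{Q_i})$ gives, annealed almost surely, density $\pi$ of such porous scales, and any choice $\eta\in(0,\pi)$ yields the result after a Fubini transfer.

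For the second assertion, I would fix $k\ge 1$ and work with $\delta\in(0,2^{-kd})$; the restriction $\delta<2^{-kd}$ is forced, since by averaging every measure satisfies $\por_2\le k$ once $\delta\ge 2^{-kd}$. Using $\mathbf{uniform}\in\supp(\PP)$, I choose $\gamma>0$ small enough that any $p\in B(\mathbf{uniform},\gamma)$ has all coordinates exceeding $\delta^{1/k}$, which ensures $\tau_1:=\PP(B(\mathbf{uniform},\gamma))>0$. For $m\ge 0$ set $i_m:=m(k+1)$, and let $A_m$ be the event that $p_R\in B(\mathbf{uniform},\gamma)$ for every cube $R$ in the depth-$k$ subtree rooted at $Q_{i_m}$. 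The essential point is that the subtrees for distinct $m$ occupy disjoint level ranges (since $i_{m+1}>i_m+k$) and hence consist of pairwise disjoint collections of cubes; the annealed independence structure then makes $(A_m)_m$ i.i.d.\ with probability $\tau:=\tau_1^{N_k}>0$, where $N_k=(2^{kd}-1)/(2^d-1)$ is the number of splitting distributions in a depth-$k$ subtree. When $A_m$ occurs, every descendant of $Q_{i_m}$ at depth $k'\le k$ has relative mass exceeding $\delta^{k'/k}\ge\delta$, so $\por_2(\mu,x,i_m,\delta)>k$. The SLLN then supplies a density of at least $\tau/(k+1)$ non-porous scales annealed almost surely, so $\sigma:=1-\tau/(2(k+1))<1$ fits the bill, and Fubini yields the quenched conclusion.

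The main technical point I would expect to occupy most of the verification is the annealed independence structure of the spine and off-spine distributions; once this is secured, the rest of the argument reduces to careful selection of i.i.d.\ subsequences, a routine application of the strong law of large numbers, and a Fubini-style transfer from annealed to quenched probability.
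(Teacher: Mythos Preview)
Your approach is correct. The paper itself does not give a proof: it simply states ``This is an easy consequence of the definitions; details are left to the reader.'' Your annealed/Fubini strategy is a natural and efficient way to supply those details, and the key structural fact you identify---that under the annealed law the spine distributions $(p_{Q_i})_{i\ge 0}$ are i.i.d.\ with law $\PP$, with off-spine distributions remaining independent $\PP$-samples---is correct and is exactly what reduces both parts to the strong law of large numbers. The block construction in the second part (spacing the roots $Q_{i_m}$ at distance $k+1$ to make the depth-$k$ subtrees disjoint) is the right way to manufacture an i.i.d.\ sequence of events.

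You also correctly flag a small slip in the statement: the range $\delta\in(0,2^{-d})$ should be $\delta\in(0,2^{-kd})$, since for $\delta\ge 2^{-kd}$ the pigeonhole principle forces $\por_2(\mu,x,n,\delta)\le k$ at every scale, making every measure dyadic mean $(k,1,\delta)$-porous and the assertion vacuously false. Your restriction to $\delta<2^{-kd}$ is the intended one.
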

\begin{proof}
This is an easy consequence of the definitions; details are left to the reader.
\end{proof}

\subsection{A converse to Theorem \ref{thm:main-result-dyadic}}

We have seen in Theorem \ref{thm:main-result-dyadic} that if a measure is (dyadic) weakly mean porous, then its packing dimension is smaller than the dimension of the ambient space. The converse implication also holds:

\begin{prop}
Let $\mu$ be a measure on $[0,1)^d$ such that $\dim_P(\mu)<d$. Then there are $\eta\in (0,1)$ and $\e\in (0,2^{-d})$ such that $\mu$ is dyadic mean $(1,\eta,\e)$-porous.
\end{prop}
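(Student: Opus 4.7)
The plan is a per-scale contrapositive estimate connecting non-porosity with slow decay of $\mu$ along the dyadic filtration. Set $a_n(x):=-\log\mu(\DD_n(x))$ and call a scale $i$ \emph{non-porous} (at level $\e$) when every $R\in\DD_1(\DD_i(x))$ satisfies $\mu(R)>\e\mu(\DD_i(x))$. At such a scale the $2^d-1$ children of $\DD_i(x)$ distinct from $\DD_{i+1}(x)$ carry combined mass greater than $(2^d-1)\e\mu(\DD_i(x))$, so
\[
\mu(\DD_{i+1}(x))<\bigl(1-(2^d-1)\e\bigr)\mu(\DD_i(x)),
\]
and hence $a_{i+1}(x)-a_i(x)\ge c(\e):=-\log(1-(2^d-1)\e)>0$. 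Since every increment is non-negative, telescoping yields $a_n(x)\ge n_{np}(x)\,c(\e)$, where $n_{np}(x)$ is the number of non-porous scales in $[n]$.

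On the other hand, by Lemma \ref{lem:dyadic-local-dim} applied to the dyadic tree $\DD^*$, if $s:=\dim_P(\mu)<d$ then for $\mu$-a.e.\ $x$ one has $\limsup_n a_n(x)/(n\log 2)\le s$. Fix any $s'\in(s,d)$; then for $\mu$-a.e.\ $x$, eventually $a_n(x)\le s'n\log 2$, whence
\[
\frac{n_{np}(x)}{n}\le\frac{s'\log 2}{c(\e)}\quad\text{eventually},\qquad\text{so}\qquad \liminf_{n\to\infty}\frac{|\{i\in[n]:\por_2(\mu,x,i,\e)\le 1\}|}{n}\ge 1-\frac{s'\log 2}{c(\e)}.
\]

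It remains to select parameters making the right-hand side positive while keeping $\e<2^{-d}$. The requirement $c(\e)>s'\log 2$ is equivalent to $\e>(1-2^{-s'})/(2^d-1)$, and since $s'<d$ we have $(1-2^{-s'})/(2^d-1)<(1-2^{-d})/(2^d-1)=2^{-d}$, so the interval of admissible $\e$ is nonempty. Choosing any such $\e\in(0,2^{-d})$ and setting $\eta:=1-s'\log 2/c(\e)\in(0,1)$ proves that $\mu$ is dyadic mean $(1,\eta,\e)$-porous.

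There is no real obstacle here: the argument is a one-step telescoping and a short computation. The only care needed is conceptual (remembering that non-porosity bounds the mass of \emph{every} child, in particular the $2^d-1$ siblings of $\DD_{i+1}(x)$, which is what produces a nontrivial gap $c(\e)$) and arithmetic (verifying that the admissible interval for $\e$ lies strictly inside $(0,2^{-d})$, which holds exactly because $s<d$).
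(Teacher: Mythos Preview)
Your proof is correct and takes a more elementary route than the paper's.

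The paper argues via entropy: if scale $i$ is non-porous at level $\e$, then the conditional distribution $\mu^{\DD_i(x)}$ on the $2^d$ children has every entry at least $\e$, so its entropy is at least $H_{\min}(\e)$, the minimum entropy subject to that constraint. Invoking Corollary~\ref{cor:bound-packing-dim-entropy-Lyapunov}, failure of $(1,\eta,\e)$-porosity on a set of positive measure then forces $\dim_P(\mu)\ge (1-\eta)H_{\min}(\e)/\log 2$, which tends to $d$ as $\eta\to 0$ and $\e\to 2^{-d}$. This recycles the martingale/entropy machinery built for the hard direction.

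You sidestep entropy entirely. Rather than bounding the \emph{expected} value of $-\log\mu^{\DD_i(x)}$, you bound its value \emph{along the actual branch through $x$}: since all $2^d-1$ siblings of $\DD_{i+1}(x)$ are heavy, $\DD_{i+1}(x)$ itself must be light, yielding the deterministic increment bound $a_{i+1}(x)-a_i(x)\ge c(\e)$. This telescopes directly and requires only the elementary upper-bound half of Lemma~\ref{lem:dyadic-local-dim}, not the law of large numbers for martingale differences. Your per-step gain $c(\e)=-\log\bigl(1-(2^d-1)\e\bigr)$ is in general smaller than $H_{\min}(\e)$ (for small $\e$ the latter has an extra $\log(1/\e)$ factor), but both tend to $d\log 2$ as $\e\to 2^{-d}$, which is all the qualitative statement needs.

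What each approach buys: yours is self-contained and would transplant to any text lacking the entropy framework; the paper's version shows that the same dimension formula (Corollary~\ref{cor:bound-packing-dim-entropy-Lyapunov}) governs both directions of the porosity--dimension correspondence.
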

\begin{proof}
Let $H_{\min}(\e)$ denote the smallest possible entropy of a probability vector $(p_1,\ldots,p_{2^d})$ subject to the constraint $p_i\ge\e$ for all $i$. Clearly,
\[
\lim_{\e\to 2^{-d}} H_{\min}(\e) = \log(2^d).
\]
Now suppose $\mu$ is not dyadic mean $(1,\eta,\e)$-porous. Then there exists a set $A$ of positive $\mu$-measure such that
\[
\liminf_{n\rightarrow\infty} \frac{1}{n} |\{ i\in [n] : \por_2(\mu,x,i,\e)\le 1 \}| < \eta
\]
for all $x\in A$. If $\por_2(\mu,x,i,\e)>1$, then $H(\DD_i(x))\ge H_{\min}(\e)$. Hence, using Corollary \ref{cor:bound-packing-dim-entropy-Lyapunov},
\begin{align*}
\dim_P(\mu) &\ge \inf_{x\in A}\limsup_{n\rightarrow\infty} \frac{\sum_{i=0}^{n-1}  H(\RR_i(x))}{\log 2\cdot n}
 \\ &\ge (1-\eta) \frac{H_{\min}(\e)}{\log 2}.
\end{align*}
Since
\[
 (1-\eta) \frac{H_{\min}(\e)}{\log 2} \rightarrow d \quad \textrm{ as }\quad \eta\to 0, \e\to 2^{-d},
\]
this yields the proposition.
\end{proof}

\subsection{Open questions}

We finish the paper with two open questions.

\begin{question}
In Theorem \ref{thm:main-result}, is it possible to take the constant $c_d$ independent of the ambient dimension $d$? (Note that in Theorem \ref{thm:main-result-dyadic} the same constant does work for all dimensions.) If not, what is the sharp rate of increase for $c_d$ as $d\rightarrow\infty$?
\end{question}

\begin{question}
Does Theorem \ref{thm:main-result} remain valid in Ahlfors-regular metric spaces? As in \cite{KRS09}, it may be necessary to make additional assumptions on the measure.
\end{question}

\bigskip

\noindent{\bf Acknowledgement}. I am grateful to A. K\"{a}enm\"{a}ki and V. Suomala for useful discussions regarding porosity and many helpful comments on earlier versions of the manuscript.


\end{document}